\documentclass[10pt,reqno]{amsart}
\usepackage[scale=0.75, centering, headheight=14pt]{geometry}

\usepackage{mathtools}
\mathtoolsset{showonlyrefs}
\usepackage{graphicx}
\usepackage{fancyhdr}
\usepackage{amssymb,amsmath,enumerate}
\usepackage{caption}
\usepackage{float}
\usepackage{subcaption}
\usepackage{comment,color}
\usepackage{placeins}
\usepackage{amsthm}
\usepackage{thmtools}
\usepackage{bbm}
\newcounter{assump}

\usepackage{stmaryrd} 
\usepackage[colorlinks=true, linkcolor=black, citecolor=black, urlcolor=black]{hyperref}
\usepackage[nameinlink, noabbrev]{cleveref}
\usepackage{tikz}
\DeclareMathOperator*{\argmin}{arg\,min}

\newtheorem{theorem}{Theorem}[section]
\newtheorem{lemma}[theorem]{Lemma}

\newtheorem{remark}[theorem]{Remark}
\newtheorem{corollary}[theorem]{Corollary}

\begin{document}

\title[Fully discrete least-squares splitting for the Monge-Amp\`ere equation]{Fully discrete least-squares splitting scheme for the Monge-Amp\`ere equation:
finite element analysis and convergence}

\author[A. Peruso]{Anna Peruso}

\address{
Institute of Mathematics, \'Ecole Polytechnique F\'ed\'erale de Lausanne, 
1015 Lausanne, Switzerland, 
and
Geneva School of Business Administration (HEG-GEN\`EVE), 
University of Applied Sciences and Arts Western Switzerland (HES-SO), 
1227 Carouge, Switzerland, 
}
\email{anna.peruso@epfl.ch, anna.peruso@hesge.ch}

\date{\today}

\begin{abstract}
The least-squares splitting algorithm for the Monge-Amp\`ere equation has been used successfully in computations for several years, but a convergence theory for fully discrete splitting schemes of this type has remained unavailable. In this work, we introduce and analyze a finite element framework for smooth solutions of the Dirichlet Monge-Amp\`ere equation in two dimensions. The proposed schemes combine a discrete Hessian reconstruction with a local projection onto the determinant constraint. Under a discrete Miranda-Talenti estimate and standard approximation properties of the Hessian reconstruction, we prove local convergence of the iterative scheme and optimal-order convergence of its limit to the exact solution in an \(H^2\)-type norm. We verify the estimates for conforming \(C^1\) schemes, including the Argyris element, and for \(C^0\)-interior penalty and DG schemes of degree at least three; quadratic \(C^0\)-interior penalty and DG schemes are also covered when \(|u|_{H^3(\Omega)}\) is sufficiently small. To the best of our knowledge, these discretizations have not previously been proposed or analyzed for least-squares splitting methods. Numerical experiments confirm the theoretical convergence rates.
\end{abstract}

\maketitle

\section{Introduction}

Fully nonlinear second-order partial differential equations arise in a wide range of applications, including differential geometry, optimal transport, geometric optics, meteorology, and stochastic control \cite{dephilippis,feng}. Among them, the Monge-Amp\`ere equation is one of the prototypical examples. In its elliptic Dirichlet form, on a bounded convex domain \(\Omega\subset \mathbb R^2\), it reads as
\[
    \det D^2 u = f \quad \text{in } \Omega,
    \qquad
    u=g \quad \text{on } \partial\Omega,
\]
where the admissible solution is required to be convex. This convexity requirement selects the elliptic branch of the equation and is essential for uniqueness \cite{caffarelli_cabre,dephilippis}.

The analysis of fully nonlinear equations is substantially more delicate than that of linear or semilinear elliptic problems. Even for smooth data, classical regularity may require additional structural assumptions on the domain, the boundary data, and the solution itself, see \cite{caffarelli_cabre,dephilippis}. From the numerical point of view, several difficulties arise. First, fully nonlinear second-order equations do not generally admit a standard weak formulation obtained by integration by parts. Consequently, methods aimed at approximating smooth solutions must either work directly with Hessians, and therefore with an \(H^2\)-type structure, or introduce suitable discrete Hessians, stabilizations, or auxiliary variables. Second, the nonlinearity has to be handled in a robust way at the discrete level. Third, for equations such as Monge-Amp\`ere, uniqueness is tied to the correct admissible class, namely the class of convex functions.

Nonetheless, over the last decades, several Galerkin-based methods have been developed for smooth solutions of the Dirichlet Monge-Amp\`ere problem. These include vanishing moment methods \cite{neilan}, mixed finite element methods \cite{lakkis,neilan_mixed}, \(C^1\) finite element methods \cite{bohmer,awanouc1,awanouspline}, \(C^0\)-interior penalty methods \cite{brenner,awanou}, discontinuous Galerkin methods \cite{unified}, and least-squares formulations \cite{brennerconvexity,amireh,brenner_convexity2}; see also the surveys \cite{feng,neilan-salgado}. The present work focuses on the nonlinear least-squares splitting method introduced by Dean and Glowinski \cite{dean} and subsequently developed in \cite{caboussat,dimitrios,peruso}. This method introduces an auxiliary matrix variable \(\mathbf P\), interpreted as an approximation of \(D^2u\), and rewrites the Monge-Amp\`ere equation as the constrained problem
\[
    \mathbf P = D^2u\,\text{ in }\Omega,
    \qquad
    \det \mathbf P = f\,\text{ in }\Omega,
    \qquad
    u = g \,\text{ on }\partial\Omega.
\]
The corresponding least-squares formulation, defined in \Cref{sec:setting}, looks for the minimizers of the $L^2$ distance between the set of admissible Hessians and the nonlinear constraint set. The resulting splitting algorithm alternates between a linear variational problem and a pointwise nearest-point projection \cite{dean,glowinski,caboussat}. Its computational efficiency stems from the fact that the nonlinear subproblem is local, while the differential subproblem remains linear.

The method has shown good numerical performance, especially in low-order \(C^0\) implementations \cite{caboussat,peruso}, where the differential subproblem can be reduced to the solution of Poisson-type problems. It has also been extended to optimal transport and optics problems \cite{prins,yadav}, as well as to other classes of fully nonlinear PDEs \cite{pucci,lsjac,lsorthogonal}. However, despite this numerical evidence, a rigorous convergence analysis of the fully discrete splitting iteration has not been made available. A recent analysis in the continuous setting reinterprets the method as an alternating projection algorithm in Sobolev spaces and proves convergence at the continuous level on $\mathbb{T}^2$; see \cite{maxanna}. The present work develops a unified framework for the corresponding fully discrete convergence analysis on general bounded convex polygonal domains. We prove that, under suitable assumptions on the discretization of the linear variational subproblem, the iterative method converges locally and its limit converges to the exact solution in an $H^2$-type norm at optimal rate with respect to the mesh size. We further verify these assumptions for representative $C^1$, $C^0$ and DG finite element methods.

\subsection{Related works}\label{secc:related_works}
There are (at least) two ways to organize the existing literature on Galerkin-type methods for the Monge-Amp\`ere equation. One may classify methods according to the discrete space used, for instance \(C^1\), \(C^0\)-interior penalty, discontinuous Galerkin, or mixed finite element spaces. Alternatively, one may classify them according to the nonlinear strategy used to solve the discrete problem, for instance Newton's method, time-marching methods, fixed-point iterations, or least-squares splitting methods.

A substantial part of the finite element literature relies on Newton-type linearizations. For instance, local well-posedness and convergence estimates for discrete Monge-Amp\`ere problems have been obtained for \(C^1\), \(C^0\)-interior penalty, discontinuous Galerkin, and mixed finite element methods \cite{bohmer,brenner,neilan_mixed,unified}. The convergence of Newton's method for such discretizations has also been studied, for example, in \cite{quadratic_neilan,neilan_mixed}. These results are local: the discrete solution is constructed, and the nonlinear iteration is shown to converge, under the assumption that the initial guess belongs to a mesh-dependent neighbourhood of the exact discrete solution.

Newton's method is not the only possible iterative strategy. \cite{awanou} proposed a time-marching method based on \(C^0\) finite elements, using an explicit Euler discretization of the evolution equation $\partial_t(\Delta u)=\det D^2u - f$. As with Newton's method, convergence is again proved in a local neighborhood whose radius depends on the mesh size. Mesh-dependent local neighbourhoods also appear in the analysis of the vanishing moment method \cite{feng_neilan_mixed,neilan,feng_neilan_2014}, where the relevant fixed-point argument is carried out for the \(\varepsilon\)-regularized fourth-order
problem.

The fact that the convergence neighbourhood shrinks with the mesh size is closely related to the need to control the discrete Hessian in a pointwise norm, as also highlighted in \cite{maxanna}. Since the nonlinear operator depends on \(D^2u\), one needs to ensure that the discrete Hessian remains in the correct elliptic regime. At the discrete level, such a pointwise control is usually obtained from an inverse estimate, which converts an \(H^2\)-type bound into a \(W^{2,\infty}\)-type bound at the cost of a negative power of \(h\). This mechanism is also present in the analysis developed below.

\subsection{Contribution}
The main contribution of this work is the first local convergence theory for fully discrete least-squares splitting schemes for the Dirichlet Monge-Amp\`ere equation, showing that these methods admit the same type of local convergence guarantees as more classical nonlinear iterative methods. We formulate the method in an abstract finite element framework, in terms of a discrete Hessian reconstruction \(H_h\), a stabilization bilinear form \(J_h\), and a pointwise metric projection onto the determinant constraint. This setting includes conforming \(C^1\), \(C^0\)-interior penalty, and discontinuous Galerkin discretizations. To the best of our knowledge, these discretizations have not previously been combined with the Dean-Glowinski least-squares splitting strategy.

Following the continuous analysis in \cite{maxanna}, we interpret the least-squares splitting algorithm as a discrete alternating minimization method. The convergence analysis relies on two assumptions on the reconstruction step: a discrete Miranda-Talenti stability estimate and a consistency estimate for the discrete Hessian reconstruction. Under these assumptions, we prove in \Cref{thm:local_fixed_point} that the fully discrete alternating minimization map is a strict contraction in a mesh-dependent neighbourhood of the exact solution. This yields the existence of a locally unique discrete fixed point and convergence of the iteration. The key estimate, \Cref{lem:transverlality}, is a discrete transversality bound between the linear reconstruction step and the tangent space to the determinant constraint. We then verify the abstract assumptions for conforming \(C^1\), including the Argyris element, and for \(C^0\)-interior penalty, and DG discretizations of polynomial degree at least three. Quadratic \(C^0\)-interior penalty and DG discretizations are also covered provided that the \(H^3\)-seminorm of the exact solution is sufficiently small, see \Cref{rem:valuealpha}. \Cref{thm:local_fixed_point} also yields the reconstructed Hessian error estimates stated in
\Cref{cor:conv_c1,cor:conv_c0,cor:convdg}.

For the original low-order realizations of \cite{caboussat,peruso}, the main remaining obstacle is an appropriate discrete Miranda-Talenti estimate; the same framework may also be applied to other existing or future low-order
reconstructions once the required stability and consistency properties are proven. Finally, as discussed in \Cref{rem:extension}, the method could extend to other smooth uniformly elliptic fully nonlinear operators.

\subsection{Outline}
The remainder of this work is organized as follows. \Cref{sec:setting} introduces the continuous least-squares formulation and splitting algorithm for the Monge-Amp\`ere equation. \Cref{sec:definition} presents the abstract fully discrete framework, while \Cref{sec:conv} establishes its convergence. In \Cref{sec:discreteMT}, we verify the assumptions for \(C^1\), \(C^0\)-interior penalty, and discontinuous Galerkin finite element methods. Numerical results are presented in \Cref{sec:numres}.

\section{Problem setting and algorithm}\label{sec:setting}
\subsection{Notation.}\label{sec:not1}
Henceforth, for an open set \(\Omega\subset \mathbb R^2\), we denote by \(H^s(\Omega)\) and \(W^{s,p}(\Omega)\), \(s\ge 0\), \(1\le p\le\infty\), the standard Sobolev spaces, equipped with their usual norms and seminorms. Vector and matrix-valued versions are understood componentwise.

Let \(n\in\mathbb N\). Matrices in \(\mathbb R^{n\times n}\) are denoted by capital letters, e.g. \(A\), while matrix fields on \(\Omega\) are denoted by bold capital letters, e.g. \(\mathbf A\). We write \(\mathbb S^n\subset\mathbb R^{n\times n}\) for the subspace of symmetric matrices and \(\mathbb S^n_{++}\) for the subset of symmetric positive definite matrices. For \(A,B\in\mathbb R^{n\times n}\), we use the Frobenius product \(A:B := \operatorname{tr}(A^\top B)\), and the associated Frobenius norm \(|A| := \sqrt{A:A}\). We denote by \(L^2(\Omega;\mathbb R^{n\times n})\) the space of square-integrable matrix fields, equipped with the norm
\[
\|\mathbf A\|_{L^2(\Omega)}^2
:= \int_\Omega |\mathbf A(x)|^2\,dx.
\]
Given a subset \(M\subset L^2(\Omega;\mathbb R^{n\times n})\), we denote by \(\Pi_M\) the metric projection onto \(M\) with respect to this \(L^2\)-inner product.

From now on, let \(\Omega\subset\mathbb R^2\) be a convex polygonal domain, and let \(\mathcal T_h\) be a shape-regular, quasi-uniform triangulation of \(\Omega\). For \(K\in\mathcal T_h\), we set \(h_K:=\operatorname{diam}(K)\) and \(h:=\max_{K\in\mathcal T_h}h_K\). For \(r\in\mathbb N_0\), we define the broken Sobolev space
\[
H^r(\mathcal T_h)
:=
\bigl\{v\in L^2(\Omega): v|_K\in H^r(K)
\ \text{for all } K\in\mathcal T_h\bigr\},
\]
equipped with the norm
\begin{equation}\label{eq:brokennorm}
\|v\|_{H^r(\mathcal T_h)}^2
:=
\sum_{K\in\mathcal T_h}\|v\|_{H^r(K)}^2.
\end{equation}
Similarly, we set
\[
\|v\|_{L^\infty(\mathcal T_h)}
:=
\max_{K\in\mathcal T_h}\|v\|_{L^\infty(K)}.
\]
Let \(m\ge 0\) be fixed. We define the broken polynomial space
\begin{equation}\label{eq:broken_pol}
Z_h
:=
\bigl\{v_h\in L^2(\Omega): v_h|_K\in\mathbb P_m(K)
\ \text{for all } K\in\mathcal T_h\bigr\}.
\end{equation}
Elements of \(Z_h\) are identified with their elementwise polynomial representatives. In particular, restrictions and point values are always understood locally on each element. In the present work, we use a nodal norm. For every \(K\in\mathcal T_h\), let \(\{x_{K,j}\}_{j=1}^{N_m}\) be the local nodes obtained from a fixed unisolvent nodal configuration on the reference triangle by affine transformation, where \(N_m=\dim\mathbb P_m(K)\). The nodes are understood locally: if two neighbouring elements share the same physical node, the corresponding local degrees of freedom are still treated as distinct. Thus, for \(v_h\in Z_h\), we write \(v_h(x_{K,j}) := (v_h|_K)(x_{K,j})\). We associate with these nodes positive weights \(w_{K,j}\) satisfying \(w_{K,j}\simeq h_K^2\) uniformly with respect to \(K\), \(j\), and \(h\). For example, if \(m=1\), one may take the vertices of \(K\) and set \(w_{K,j}=|K|/3\); alternative choices are considered in \Cref{sec:numres}. We then define the nodal scalar product
\[
(v_h,z_h)_h
:=
\sum_{K\in\mathcal T_h}\sum_{j=1}^{N_m}
w_{K,j}v_h(x_{K,j})z_h(x_{K,j}),
\qquad v_h,z_h\in Z_h.
\]
The associated norm is defined by \(\|v_h\|_h^2 := (v_h,v_h)_h\). Similarly, we define
\[
\|v_h\|_{h,\infty}
:=
\max_{K\in\mathcal T_h}
\max_{1\le j\le N_m}
|v_h(x_{K,j})|,
\qquad v_h\in Z_h.
\]
We shall use the same notation for any piecewise function whose values at the local nodes \(x_{K,j}\) are well defined. On \(Z_h\), the norms \(\|\cdot\|_h\) and \(\|\cdot\|_{h,\infty}\) are equivalent to \(\|\cdot\|_{L^2(\mathcal T_h)}\) and
\(\|\cdot\|_{L^\infty(\mathcal T_h)}\), respectively, with constants independent of \(h\). Moreover,
\begin{equation}\label{eq:inverseest}
\|v_h\|_{h,\infty}
\le C_{\mathrm{inv}} h^{-1}\|v_h\|_h,
\qquad \forall v_h\in Z_h,
\end{equation}
where \(C_{\mathrm{inv}}>0\) is independent of \(h\), see for instance \cite{brenner_fem}. The definitions of \(H^r(\mathcal T_h)\), \(Z_h\), \(\|\cdot\|_{H^r(\mathcal T_h)}\), \(\|\cdot\|_h\), and \(\|\cdot\|_{h,\infty}\) extend componentwise to vector-valued and
matrix-valued spaces, using the same local nodes and the Euclidean or Frobenius norm at each node. In particular, \(Z_h(\mathbb S^n)\) denotes the space of elementwise polynomial fields with values in \(\mathbb S^n\).

In analogy with the \(L^2\)-metric projection, for a subset \(M\subset Z_h\) we denote by \(\Pi_M^{(h)}\) the metric projection onto \(M\) with respect to the norm \(\|\cdot\|_h\). Finally, if \(L:Z_h\to Z_h\) is a bounded linear operator, we define
\[
\|L\|_{h\to h}
:=
\sup_{\substack{v_h\in Z_h\\ \|v_h\|_h\le 1}}
\|Lv_h\|_h .
\]

\subsection{Model problem and algorithm.}
Let \(\Omega \subset \mathbb{R}^2\) be a convex polygonal domain with boundary \(\partial\Omega\). Assume that \(f \in C^0(\overline{\Omega})\) satisfies \( 0< c_0 \leq f \leq c_1 < \infty\) in \(\Omega\), and let \(g \in H^{3/2}(\partial\Omega)\). The elliptic Dirichlet Monge-Amp\`ere problem is given by: find $u:\Omega\to\mathbb{R}$ such that 
\begin{equation}\label{eq:prob}
\begin{cases}
\det D^2 u = f \quad &\text{in } \Omega,\\
u = g \quad &\text{on } \partial\Omega,
\end{cases}
\end{equation}
where the unknown function \(u\) is required to be convex and \(D^2u\) denotes its Hessian, i.e. \([D^2u]_{ij} = \partial_{x_i x_j} u\). 

We adopt the nonlinear least-squares framework of \cite{dean,caboussat}, which introduces an auxiliary variable. Define \(\mathbf P := D^2u \in L^2(\Omega;\mathbb{S}^2)\). Then \eqref{eq:prob} can be rewritten as
\begin{equation}\label{eq:prob1}
\begin{cases}
\text{\rm det}\:\mathbf{P} = f \quad &\text{in } \Omega, \\
\mathbf{P} = D^2u\quad &\text{in } \Omega, \\
u = g \quad &\text{on } \partial\Omega.\\
\end{cases}
\end{equation}
Since we seek a convex solution to \eqref{eq:prob}, we impose the additional requirement that $\mathbf{P}$ is symmetric and positive definite, hereafter spd. We now introduce the functional spaces and sets used to formulate the solution of \eqref{eq:prob1}:
\begin{align} \label{d:V-g}
    \mathcal{V}_g &:= \{ D^2v \in L^2(\Omega,\mathbb{S}^2) :\, v \in H^{2}(\Omega),\ v|_{\partial\Omega} = g \}
\end{align}
and 
\begin{equation} \label{d:B}
    \mathcal{B} := \{ \mathbf{Q} \in L^2(\Omega,\mathbb{S}^2) :\ \det \mathbf{Q}(x) = f(x)\ \text{a.e. in } \Omega,\ \mathbf{Q}(x)\ \text{spd a.e. in } \Omega  \}.
\end{equation}

It follows that in \eqref{eq:prob1} we seek $\mathbf{P} \in \mathcal{B}$ and $D^2u \in \mathcal{V}_g$.  To determine the pair $(D^2u, \mathbf{P})$, we reformulate \eqref{eq:prob1} as the nonlinear least-squares problem
\begin{equation}\label{eq:leastsq}
(D^2u, \mathbf{P}) 
= \argmin_{D^2v \in \mathcal{V}_g,\; \mathbf{Q} \in \mathcal{B}}
\| D^2v - \mathbf{Q} \|^2_{L^2(\Omega)}.
\end{equation}
As observed in \cite{peruso}, we note that \eqref{eq:leastsq} may admit a solution even in cases where \eqref{eq:prob1} does not, namely when $\mathcal{V}_g \cap \mathcal{B} = \emptyset$. However, whenever \eqref{eq:prob1} has a solution, it also satisfies \eqref{eq:leastsq}, and moreover $\|D^2u - \mathbf{P}\| = 0$. To approximate the solution of \eqref{eq:leastsq}, we employ the splitting algorithm proposed in \cite{dean,caboussat,peruso}, which iteratively decomposes the minimization problem \eqref{eq:leastsq} into two subproblems. Specifically, given an initial guess $\mathbf{P}^0 \in L^2(\Omega,\mathbb{S}^2)$, for each $n \ge 0$ we seek $D^2u^n$ and $\mathbf{P}^{n+1}$ such that:
\begin{subequations}\label{eq:splitting}
\begin{align}
\label{eq:biharmonic} D^2u^{n} = &\argmin_{D^2v\in \mathcal{V}_g}\|D^2v-\mathbf{P}^n\|^2_{L^2(\Omega)},\\
\label{eq:firstmin}\mathbf{P}^{n+1} =& \argmin_{\mathbf{Q}\in \mathcal{B}}\|D^2u^n-\mathbf{Q}\|^2_{L^2(\Omega)}.
\end{align}
\end{subequations}
This approach is an instance of alternating minimization, also known as a block coordinate descent (or Gauss-Seidel-type) scheme, and it decouples the nonlinear constraint from the variational part of the problem. More precisely, the second subproblem \eqref{eq:firstmin} is local in $x\in\Omega$, consisting of the pointwise projection of a symmetric matrix onto the constraint set $\mathcal{B}$, and admits an explicit solution via a Lagrange multiplier argument; see \cite{glowinski}. On the other hand, subproblem \eqref{eq:biharmonic} is linear and the Euler-Lagrange equation reads: find $u\in H^2(\Omega)$, $u=g$ on $\partial\Omega$, such that $a(u,v) = L(v)$ for all $v\in H^2(\Omega)\cap H^1_0(\Omega)$, where
\begin{equation}\label{eq:bilinear}
  a(u,v):= \int_{\Omega}D^2u:D^2v,\quad L(v):=\int_{\Omega}\mathbf{P}:D^2v.  
\end{equation}
The problem is well-posed because $a(\cdot,\cdot)$ defines a norm in $H^2(\Omega)\cap H^1_0(\Omega)$. Its numerical approximation by low-order mixed finite element methods is studied in \cite{caboussat,peruso}. In the implementation of \cite{peruso}, the linear subproblem can be reduced to the solution of two Poisson equations with continuous piecewise linear finite elements, making each iteration computationally inexpensive compared with fully coupled Newton linearizations of the Monge-Ampère equation.

We now reinterpret the splitting scheme \eqref{eq:splitting} as an alternating minimization method. As observed in \cite{maxanna},
\eqref{eq:splitting} is a block coordinate descent scheme for the least-squares functional \eqref{eq:leastsq}. Equivalently, it can be written as
\begin{equation}\label{eq:projj}
D^2u^{n} = \Pi_{\mathcal{V}_g}(\mathbf{P}^n), 
\quad 
\mathbf{P}^{n+1} = \Pi_{\mathcal{B}}(D^2u^n),
\quad 
n \ge 0,
\quad 
\mathbf{P}^0 \text{ given}.
\end{equation}
Here \(\Pi_{\mathcal{V}_g}\) denotes the \(L^2\)-orthogonal projection onto the affine space \(\mathcal{V}_g\), while \(\Pi_{\mathcal{B}}\) denotes the metric projection onto the nonconvex constraint set \(\mathcal{B}\), understood where the latter projection is single-valued. Thus, at least locally, the iteration may be written as
\[
\mathbf{P}^{n+1}=T(\mathbf{P}^n),
\qquad
T:=\Pi_{\mathcal{B}}\circ \Pi_{\mathcal{V}_g}.
\]

At the continuous level, local convergence of this alternating projections algorithm was proved in \cite{maxanna} on \(\mathbb T^2\), under suitable regularity and nondegeneracy assumptions on the exact solution. The purpose of the present work is to develop the corresponding fully discrete convergence theory for bounded convex polygonal domains in $\mathbb{R}^2$. This requires additional ingredients which are absent in the continuous analysis, namely a discrete Hessian operator, a discrete norm for the local nonlinear projection, and stability estimates for the Hessian reconstruction step.

\section{Definition of the discrete scheme}\label{sec:definition}
In this section we aim to build a discrete version of the alternating projection algorithm in \eqref{eq:splitting}. We also state the set of assumptions that are sufficient to prove the convergence of this scheme in \Cref{sec:conv}. To make the exposition easier to follow we treat the case $g\equiv 0$; the case of nonhomogeneous Dirichlet boundary condition follows accordingly, for both strong and weak treatment of the Dirichlet boundary condition.

Let $m\geq 0$ be the degree of polynomial in $Z_h$, as defined in \eqref{eq:broken_pol}. We assume in the following that there exists a smooth solution $u\in W^{3,\infty}(\Omega)\cap H^{m+3}(\Omega)$, to \eqref{eq:prob} and that $D^2u$ is uniformly elliptic, i.e. there exist $\nu_1\geq \nu_2 >0$ such that 
\begin{equation}\tag{A.1}\label{eq:elliptic}
    \nu_2|\xi|^2\leq \xi^T D^2u(x)\xi\leq \nu_1|\xi|^2 \quad \forall\xi\in \mathbb{R}^2\quad\forall x\in\Omega \quad \text{and}\quad D^2u\in W^{1,\infty}(\Omega,\mathbb{S}^2)\cap H^{m+1}(\Omega,\mathbb{S}^2).
\end{equation}
Now, we consider a possible discretization of this scheme. Specifically, the goal is to define a discrete scheme where \eqref{eq:firstmin} is solved pointwise on some points of a given mesh $\mathcal{T}_h$. The construction is thus centered around this requirement. Indeed, the benefit of the splitting \eqref{eq:biharmonic}-\eqref{eq:firstmin} is to have a nonlinear but local problem and one linear but variational problem and the numerical scheme should preserve this property. 

With this goal in mind, we define the discrete determinant constraint set
\begin{equation}
\mathcal{B}_h
:=
\left\{
\mathbf{Q}_h\in Z_h(\mathbb{S}^2):\,
\mathbf{Q}_h(x_{K,j})\in \mathbb{S}^2_{++},\,
\det\mathbf{Q}_h(x_{K,j})=f(x_{K,j})
\,\,\forall K\in\mathcal{T}_h,\,
j=1,\dots,N_m
\right\},
\end{equation}
and we denote by
\begin{equation}
    \Pi_{\mathcal{B}_h}^{(h)}:Z_h(\mathbb{S}^2)\rightarrow \mathcal{B}_h
\end{equation}
the metric projection onto $\mathcal{B}_h$ with respect to the norm $\|\cdot\|_h$. Since \(\mathcal B_h\) is nonconvex, the metric projection
\(\Pi_{\mathcal B_h}^{(h)}\) is initially understood as set-valued. \Cref{thm:smooth_projection} shows that it is single-valued in a neighbourhood of
\(\widehat{\mathbf P}_h\). Given that the norm \(\|\cdot\|_h\) is defined only through the local nodal values,
the projection is completely local. Indeed, for a given
\(\mathbf{P}_h\in Z_h(\mathbb{S}^2)\), the value of
\(\Pi_{\mathcal{B}_h}^{(h)}(\mathbf{P}_h)\) at a node \(x_{K,j}\) is obtained by
projecting the matrix \(\mathbf{P}_h(x_{K,j})\) onto the set
\begin{equation}\label{eq:Ma_mani}
    \mathcal{M}_a:=\left\{
    M\in\mathbb{S}^2_{++}:\det M=a
    \right\},
\end{equation}
where $a=f(x_{K,j})$. The projected finite element field is then reconstructed from these nodal values. The local nonlinear minimization problem can be solved, for instance, via the algorithm presented in \cite{glowinski}. Note that the projection onto $\mathcal{B}_h$ with respect to the $L^2$ norm would not have given the same local property.

We first show that the discrete projection \(\Pi_{\mathcal{B}_h}^{(h)}\) is smooth in a neighbourhood of the nodal
interpolant of the exact Hessian \(D^2u\). We then describe the discrete approximation of \eqref{eq:biharmonic}. We denote by \(I_h^Z:C^0(\overline{\Omega})\to Z_h\) the local nodal interpolation operator, defined by
\[
    (I_h^Z v)(x_{K,j})=v(x_{K,j}),
    \qquad K\in\mathcal{T}_h,\quad j=1,\dots,N_m.
\]
The operator $I_h^Z$ is extended componentwise to vector-valued and matrix-valued functions. We define
$$    \widehat{\mathbf{P}}_h := I_h^Z(D^2u),$$
i.e. the interpolant of the exact Hessian. By definition and \eqref{eq:elliptic}, $\widehat{\mathbf{P}}_h$ is symmetric positive definite at every node. Moreover, since $D^2u\in W^{1,\infty}(\Omega)$, standard interpolation estimates \cite{brenner_fem} yield $\|I_h^Z(D^2u)-D^2u\|_{L^\infty(\Omega)} \le Ch\,|D^2u|_{W^{1,\infty}(\Omega)}$. Therefore, for sufficiently small $h$, $\widehat{\mathbf{P}}_h$ remains symmetric positive definite throughout each element.
\begin{theorem}\label{thm:smooth_projection}
Assume \eqref{eq:elliptic} holds. Then, there exist $0 < \tau_0 \leq \nu_2/2$ and $C_\Pi$, both independent of $h$, such that, for every
$0<\tau\leq \tau_0$, the projection $\Pi_{\mathcal{B}_h}^{(h)}$ is single-valued and $C^\infty$ on 
\[
\mathcal{U}_\tau^{\infty}(\widehat{\mathbf{P}}_h)
:=
\left\{
\mathbf{Q}_h\in Z_h(\mathbb{S}^2):
\|\mathbf{Q}_h-\widehat{\mathbf{P}}_h\|_{h,\infty}<\tau
\right\};
\]
every $\mathbf{Q}_h\in \mathcal{U}_\tau^{\infty}(\widehat{\mathbf{P}}_h)$ is symmetric positive definite at each local node and, for sufficiently small $h$, throughout each element. Moreover, for every
$\mathbf{A}_h,\mathbf{B}_h\in \mathcal{U}_\tau^{\infty}(\widehat{\mathbf{P}}_h)$:
\begin{align}
\label{eq:lip}
\|\Pi_{\mathcal{B}_h}^{(h)}(\mathbf{A}_h)
-
\Pi_{\mathcal{B}_h}^{(h)}(\mathbf{B}_h)\|_h
&\leq
C_\Pi
\|\mathbf{A}_h-\mathbf{B}_h\|_h,
\\
\label{eq:difflip}
\|D\Pi_{\mathcal{B}_h}^{(h)}(\mathbf{A}_h)
-
D\Pi_{\mathcal{B}_h}^{(h)}(\mathbf{B}_h)\|_{h\to h}
&\leq
C_\Pi
\|\mathbf{A}_h-\mathbf{B}_h\|_{h,\infty}.
\end{align}
Finally, $D \Pi^{(h)}_{\mathcal{B}_h}(\widehat{\mathbf{P}}_h)[\cdot]=\Pi^{(h)}_{T_{\mathcal{B}_h}(\widehat{\mathbf{P}}_h)}(\cdot)$, where $T_{\mathcal{B}_h}(\widehat{\mathbf{P}}_h)$ is the tangent space to $\mathcal{B}_h$ at $\widehat{\mathbf{P}}_h$, namely: 
\[
T_{\mathcal{B}_h}(\widehat{\mathbf{P}}_h)
:=
\left\{
\mathbf{W}_h\in Z_h(\mathbb{S}^2),\,\,\,
\operatorname{cof}(\widehat{\mathbf{P}}_h(x_{K,j}))
:
\mathbf{W}_h(x_{K,j})
=
0
\quad
\forall K\in\mathcal{T}_h,\ j=1,\dots,N_m
\right\},
\] 
where $\operatorname{cof}(\mathbf{P}) := \det(\mathbf{P})\mathbf{P}^{-T}$.

\end{theorem}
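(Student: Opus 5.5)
The plan is to reduce everything to a single matrix problem. The projection \(\Pi_{\mathcal B_h}^{(h)}\) decouples node by node, because \(\|\cdot\|_h^2\) is a weighted sum of nodal Frobenius norms with positive weights, and \(\mathcal B_h\) is a product of the nodal constraint sets \(\mathcal M_{f(x_{K,j})}\). An element of \(Z_h\) is determined by its local nodal values through a linear bijection. Hence
\[
\bigl(\Pi_{\mathcal B_h}^{(h)}\mathbf Q_h\bigr)(x_{K,j})=\pi_{a}\bigl(\mathbf Q_h(x_{K,j})\bigr),\qquad a=f(x_{K,j}),
\]
where \(\pi_a\) is the Frobenius nearest-point projection onto \(\mathcal M_a\subset\mathbb S^2\). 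All claims therefore follow from uniform local statements about \(\pi_a\) for \(a\in[c_0,c_1]\), restricted to the compact set \(\mathcal K:=\{M\in\mathbb S^2:\nu_2 I\le M\le \nu_1 I\}\). This set contains every nodal value of \(\widehat{\mathbf P}_h\), since \(\widehat{\mathbf P}_h(x_{K,j})=D^2u(x_{K,j})\).

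First I will analyse the single-matrix problem. \(\mathcal M_a\) is a smooth embedded 2-dimensional submanifold of the 3-dimensional space \(\mathbb S^2\). It is the regular level set \(\{\det=a\}\cap\mathbb S^2_{++}\), since \(\nabla\det(M)=\operatorname{cof}(M)\neq0\). It depends smoothly on \(a\), with curvature bounded uniformly on the compact region \(\{M\in\mathcal M_a:\tfrac{\nu_2}{4}I\le M\le 2\nu_1 I,\ a\in[c_0,c_1]\}\).

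The tubular neighbourhood theorem, applied to the combined manifold \(\{(M,a):\det M=a\}\) (compact-parameter version), gives a uniform \(\delta>0\) such that \(\pi_a\) is single-valued and \(C^\infty\) in \((M,a)\) on the \(\delta\)-neighbourhood of the relevant compact piece of \(\mathcal M_a\). The same theorem gives \(D\pi_a(M)=\) orthogonal projection onto \(T_M\mathcal M_a=\{W:\operatorname{cof}(M):W=0\}\) for \(M\in\mathcal M_a\).

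The subtle point, and the step I expect to be the main obstacle, is that \(\widehat{\mathbf P}_h(x_{K,j})=D^2u(x_{K,j})\) lies exactly on \(\mathcal M_{f(x_{K,j})}\), because \(\det D^2u=f\) pointwise. So I must prove that the nearest point of \(M\) with \(|M-D^2u(x)|<\tau\) lies in the good compact piece, and not on some far branch of \(\mathcal M_a\). For this I will argue that any minimiser \(Q\) satisfies \(|Q-M|\le|M-D^2u(x)|<\tau\), so \(|Q-D^2u(x)|<2\tau\). Choosing \(\tau_0\le\min(\nu_2/2,\delta/2)\), small enough that the \(2\tau_0\)-ball around \(\mathcal K\) stays positive definite and inside the tubular neighbourhood, yields uniqueness and smoothness. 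The resulting \(\tau_0\) depends only on \(\nu_1,\nu_2,c_0,c_1\). Positive definiteness at nodes follows from \(\tau\le\nu_2/2\). Throughout each element it follows from the inverse-type equivalence of nodal and \(L^\infty\) norms on \(Z_h\) together with the interpolation estimate for \(\widehat{\mathbf P}_h\) quoted before the theorem, for \(h\) small.

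Next I will derive the estimates from uniform bounds on the derivatives. Let \(L\) bound \(D\pi_a\) and \(L_2\) bound \(D^2\pi_a\) uniformly on the (convex) \(\tau_0\)-balls around points of \(\mathcal K\). The segment between \(\mathbf A_h(x_{K,j})\) and \(\mathbf B_h(x_{K,j})\) stays in the \(\tau\)-ball around \(D^2u(x_{K,j})\), by convexity of the ball. The mean value theorem then gives nodal bounds \(|\pi_a(A)-\pi_a(B)|\le L|A-B|\). Squaring, weighting by \(w_{K,j}\) and summing gives \eqref{eq:lip} with \(C_\Pi\ge L\).

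The derivative \(D\Pi_{\mathcal B_h}^{(h)}(\mathbf A_h)\) acts nodewise as \(D\pi_a(\mathbf A_h(x_{K,j}))\). For any direction \(\mathbf W_h\), the nodal difference is therefore at most \(L_2|A-B|\,|\mathbf W_h(x_{K,j})|\le L_2\|\mathbf A_h-\mathbf B_h\|_{h,\infty}|\mathbf W_h(x_{K,j})|\). Summing with the weights yields \eqref{eq:difflip}.

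Finally, the nodewise derivative at \(\widehat{\mathbf P}_h\) is the Frobenius orthogonal projection onto \(\{W:\operatorname{cof}(\widehat{\mathbf P}_h(x_{K,j})):W=0\}\). Since the \((\cdot,\cdot)_h\) product is a positively weighted sum of nodal Frobenius products, assembling these nodal projections gives exactly the \(\|\cdot\|_h\)-orthogonal projection onto \(T_{\mathcal B_h}(\widehat{\mathbf P}_h)\). This completes the identification of \(D\Pi^{(h)}_{\mathcal B_h}(\widehat{\mathbf P}_h)\).
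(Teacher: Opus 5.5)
Your argument is correct and has the same skeleton as the paper's: nodewise decoupling, a smooth nearest-point projection near a compact piece of the constraint surface, uniform bounds on $D\pi_a$ and $D^2\pi_a$, and nodewise assembly of the tangent projections. Where you differ is in how you get uniformity in $a=f(x_{K,j})$ and global single-valuedness.

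The paper uses the homogeneity $\mathcal M_a=\sqrt a\,\mathcal M_1$. It works on the single surface $\mathcal M_1$ with the compact set $\mathcal K_0$ of normalized nodal values, and places an $r$-neighbourhood of $\mathcal K_0$ inside the open set $\mathcal E(\mathcal M_1)$ where the projection is unique and smooth (a cited result). It then transfers the bounds through $D\pi_a(X)=D\pi_1(X/\sqrt a)$ and $D^2\pi_a(X)=a^{-1/2}D^2\pi_1(X/\sqrt a)$, which gives the explicit choice $\tau_0=\min\{\nu_2/2,\sqrt{c_0}\,r\}$.

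You instead take a tubular neighbourhood that is uniform over $a\in[c_0,c_1]$, and confine every minimizer through the competitor bound $|Q-M|\le|M-D^2u(x)|$. This makes the global-uniqueness step self-contained and does not use the homogeneity of $\det$. It therefore carries over directly to the general constraint $F(M)=0$ of Remark~\ref{rem:extension}, at the cost of a heavier parametrized argument. Your explicit use of convexity of the nodal balls for the mean value theorem is a detail the paper leaves implicit.

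Two precisions are needed. First, the uniform tubular neighbourhood should come from the fibred normal map $(Q,t,a)\mapsto(Q+t\operatorname{cof}(Q)/|\operatorname{cof}(Q)|,a)$, together with the inverse function theorem and compactness. It should not come from the nearest-point map of $\{(M,a):\det M=a\}$ in $\mathbb S^2\times\mathbb R$, which would also move $a$.

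Second, positive definiteness throughout each element requires shrinking $\tau_0$ by the $h$-independent equivalence constant between $\|\cdot\|_{h,\infty}$ and $\|\cdot\|_{L^\infty(\mathcal T_h)}$. The paper does this explicitly by decreasing $\tau_0$. This constant can exceed $2$; for the interior three-point Gauss rule it equals $7/3$. So small $h$ alone does not guarantee elementwise positivity for $\tau$ near $\nu_2/2$, and your conditions on $\tau_0$ as stated do not yet include this requirement.
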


\begin{proof}

Let $\mathcal{M}_a$ be defined by \eqref{eq:Ma_mani}. The set \(\mathcal M_1\) is a smooth embedded hypersurface of \(\mathbb S^2\), since \(D(\det)(N)[W]=\operatorname{cof}(N):W\) and \(\operatorname{cof}(N)\neq0\) for every \(N\in\mathcal M_1\). At each local node, let $a:=f(x_{K,j})$ and $N_{K,j}:=a^{-1/2}\widehat{\mathbf P}_h(x_{K,j})$. Then \(\det N_{K,j}=1\), and \eqref{eq:elliptic} together with \(c_0\leq a\leq c_1\) implies that all normalized nodal values $N_{K,j}$ belong to the compact set
\[
\mathcal K_0
:=
\left\{
M\in\mathcal M_1:
\frac{\nu_2}{\sqrt{c_1}}I
\leq M
\leq
\frac{\nu_1}{\sqrt{c_0}}I
\right\}.
\]
Let \(\mathcal E(\mathcal M_1)\subset\mathbb S^2\) be the open set on which the nearest-point projection \(\pi_1:\mathcal E(\mathcal M_1)\to\mathcal M_1\) is uniquely defined. Since \(\mathcal M_1\) is \(C^\infty\), \(\mathcal M_1\subset\mathcal E(\mathcal M_1)\) and \(\pi_1\) is \(C^\infty\); see \cite{Leobacher}. Morever, there exists \(r>0\)
such that
\[
\left\{
X\in\mathbb S^2:
\operatorname{dist}(X,\mathcal K_0)\leq r
\right\}
\subset\mathcal E(\mathcal M_1),
\]
and, by compactness, there exists \(C>0\) such that
\[
\|D\pi_1(X)\|+\|D^2\pi_1(X)\|\leq C
\]
whenever \(\operatorname{dist}(X,\mathcal K_0)\leq r\).
Since \(\mathcal M_a=\sqrt a\,\mathcal M_1\), the corresponding metric projections satisfy
\[
\pi_a(X)
=
\sqrt a\,\pi_1\left(\frac{X}{\sqrt a}\right),\quad D\pi_a(X)
=
D\pi_1\left(\frac{X}{\sqrt a}\right),
\quad
D^2\pi_a(X)
=
\frac{1}{\sqrt a}
D^2\pi_1\left(\frac{X}{\sqrt a}\right),
\]
see also \cite{glowinski}. Since \(a\geq c_0\), these derivatives are uniformly bounded for \(a\in[c_0,c_1]\). Set \(\tau_0:=\min\{\frac{\nu_2}{2},\sqrt{c_0}\,r\}\). If \(|X-\widehat{\mathbf P}_h(x_{K,j})|<\tau_0\), then \(\operatorname{dist}(X/\sqrt a,\mathcal K_0)\leq r\). Hence \(\pi_a(X)\) is uniquely defined and smooth, with bounds independent of \(h\), \(K\), and \(j\). 

As previously discussed, the projection is computed pointwise:
\[
\bigl(\Pi_{\mathcal B_h}^{(h)}\mathbf Q_h\bigr)(x_{K,j})
=
\pi_{f(x_{K,j})}
\bigl(\mathbf Q_h(x_{K,j})\bigr).
\]
This proves its single-valuedness and smoothness. Moreover, if \(\mathbf Q_h\in\mathcal U_\tau^\infty(\widehat{\mathbf P}_h)\), then
\[
\xi^T\mathbf Q_h(x_{K,j})\xi
\ge
\left(
\nu_2-
|\mathbf Q_h(x_{K,j})
-\widehat{\mathbf P}_h(x_{K,j})|
\right)|\xi|^2
\ge \frac{\nu_2}{2}|\xi|^2,
\]
so \(\mathbf Q_h\) is positive definite at every local node. Moreover, decreasing $\tau_0$ if necessary, since \(\widehat{\mathbf P}_h\) is positive definite on each element for sufficiently small \(h\) and $\|\cdot\|_{h,\infty}$ and $\|\cdot\|_{L^\infty(\mathcal{T}_h)}$ are equivalent, \(\mathbf Q_h\in\mathcal U_\tau^\infty(\widehat{\mathbf P}_h)\) is also positive definite on each element. 
The uniform bound on \(D\pi_a\) yields \eqref{eq:lip}. Moreover, let \(\mathbf{A}_h,\mathbf{B}_h\in \mathcal{U}_\tau^{\infty}(\widehat{\mathbf{P}}_h)\), then, for every \(\mathbf{Z}_h\in Z_h(\mathbb{S}^2)\),
\[
\begin{aligned}
\left|
\left[
(D\Pi^{(h)}_{\mathcal{B}_h}(\mathbf{A}_h)-D\Pi^{(h)}_{\mathcal{B}_h}(\mathbf{B}_h))[\mathbf{Z}_h]
\right](x_{K,j})
\right|\le&
C_\Pi |\mathbf{A}_h(x_{K,j})-\mathbf{B}_h(x_{K,j})|\,|\mathbf{Z}_h(x_{K,j})|
\\
\le&
C_\Pi \|\mathbf{A}_h-\mathbf{B}_h\|_{h,\infty}|\mathbf{Z}_h(x_{K,j})|,
\end{aligned}
\]
which yields \eqref{eq:difflip}. Finally, define the tangent space \(T_{\mathcal M_a}(M):=\{W\in\mathbb S^2:\operatorname{cof}(M):W=0\}\). The differential of the nearest-point projection at \(M\in\mathcal M_a\) is the Frobenius-orthogonal projection onto \(T_{\mathcal M_a}(M)\); see for instance \cite{johnlee,Leobacher}. Applying this identity nodewise gives \(D\Pi_{\mathcal B_h}^{(h)} (\widehat{\mathbf P}_h)=\Pi_{T_{\mathcal B_h}(\widehat{\mathbf P}_h)}^{(h)}\).
\end{proof}

We now turn to the discrete approximation of \eqref{eq:biharmonic}. Let \(X_h\) be the scalar finite element space in which the approximation of \(u\) is sought, e.g. standard Lagrange finite elements. If the homogeneous Dirichlet condition is imposed strongly, then \(X_h\) is the corresponding subspace satisfying this condition. Otherwise, \(X_h\) is the full scalar space, and the boundary condition is enforced weakly through the bilinear form. Concrete examples of $X_h$ will be presented in \Cref{sec:discreteMT}.

Let
\[
H_h:X_h\to Z_h(\mathbb S^2)
\]
be a linear discrete Hessian operator, and let
\[
J_h:X_h\times X_h\to\mathbb R
\]
be a symmetric positive semidefinite stabilization bilinear form, e.g. a jump penalization term. It includes boundary penalty terms whenever required. For \(\sigma>0\), define
\[
a_h(w_h,v_h)
:=
(H_hw_h,H_hv_h)_h+\sigma J_h(w_h,v_h),
\qquad
w_h,v_h\in X_h.
\]
Given \(\mathbf P_h\in Z_h(\mathbb S^2)\), we seek \(u_h=u_h(\mathbf P_h)\in X_h\) satisfying
\begin{equation}
\label{eq:bihC0IP}
a_h(u_h,v_h)
=
(\mathbf P_h,H_hv_h)_h,
\qquad
\forall v_h\in X_h.
\end{equation}
The associated discrete Hessian is then defined by
\[
R_h\mathbf P_h:=H_hu_h(\mathbf P_h).
\]
We do not require \(a_h\) to be coercive on \(X_h\), but only that \(R_h\) be well defined. This follows from the fact that
\[
z_h\in\ker a_h
\quad\Longrightarrow\quad
H_hz_h=0,
\]
which holds since \(J_h\) is symmetric positive semidefinite. Indeed, if \(u_h^1\) and \(u_h^2\) satisfy \eqref{eq:bihC0IP}, then \(u_h^1-u_h^2\in\ker a_h\), so that \(H_hu_h^1=H_hu_h^2\). Hence \(R_h:Z_h(\mathbb S^2)\to Z_h(\mathbb S^2)\) is well defined.

With this notation, the discrete alternating minimization scheme is defined as follows. Given \(\mathbf P_h^0\in Z_h(\mathbb S^2)\), for \(n\geq 0\) choose \((u_h^n,\mathbf P_h^{n+1})\in X_h\times\mathcal B_h\) solving
\begin{subequations}\label{eq:splitting_disc}
    \begin{align}
    \label{eq:biharmonic_disc}u_h^n
    \in
    &\argmin_{v_h\in X_h}
    \left\{
        \|H_hv_h-\mathbf P_h^n\|_h^2
        +\sigma J_h(v_h,v_h)
    \right\},\\
    \label{eq:firstmin_disc}\mathbf P_h^{n+1}
    =&\argmin_{\mathbf{Q}_h\in\mathcal{B}_h}\|\mathbf{Q}_h - H_hu_h^n\|_h^2.     
    \end{align}
\end{subequations}

In order to state the main result, \Cref{thm:local_fixed_point}, we impose two assumptions on the discrete linear map \(R_h\). The first is a stability assumption of Miranda-Talenti type. Recall that, at the continuous level, the Miranda-Talenti inequality \cite{miranda,talenti} gives
\[
    |u|_{H^2(\Omega)}
    \leq
    \|\Delta u\|_{L^2(\Omega)}
    \qquad
    \forall u\in H^2(\Omega)\cap H^1_0(\Omega).
\]
We assume the following discrete analogue: there exists a constant \(C>0\), independent of \(h\), such that
\begin{equation}\label{eq:MTstab}
\tag{A.2}
    \|H_hv_h\|_{L^2(\mathcal T_h)}
    \le
    \|\operatorname{tr}(H_hv_h)\|_{L^2(\mathcal T_h)}
    +
    C J_h(v_h,v_h)^{1/2}
    \qquad
    \forall v_h\in X_h .
\end{equation}
Here the stabilization term compensates for the fact that the discrete Hessian does not necessarily satisfy the exact continuous Miranda-Talenti estimate, when, for instance, $X_h\not \subset H^2(\Omega)$. The second assumption is a consistency requirement at \(\widehat{\mathbf P}_h\). More precisely, we assume that there exist
constants \(C_R>0\) and \(\alpha>0\), independent of \(h\), such that
\begin{equation}\label{eq:boundhR_h}
\tag{A.3}
    \rho_h
    :=
    \|R_h(\widehat{\mathbf P}_h)-\widehat{\mathbf P}_h\|_h
    \leq
    C_R h^{1+\alpha}.
\end{equation}
In other words, \(\widehat{\mathbf P}_h\) is required to be an approximate fixed point of the discrete linear step \(R_h\). This is the discrete analogue of the continuous identity \(\Pi_{\mathcal V_g}(\mathbf P)=\mathbf P\),
which holds for the exact Hessian \(\mathbf P=D^2u\), since \(\mathbf P\in\mathcal{V}_g\). At the discrete level this identity is not expected to hold exactly, because \(\widehat{\mathbf P}_h\) need not belong exactly to the discrete range of \(R_h\). Assumption \eqref{eq:boundhR_h} therefore quantifies the consistency defect.

\section{Convergence of the discrete scheme}\label{sec:conv}
In this section we prove the local convergence of the discrete alternating minimization scheme \eqref{eq:biharmonic_disc}-\eqref{eq:firstmin_disc}. The argument is based on viewing the scheme as the fixed point iteration
\[
    \mathbf P_h^{n+1}=T_h(\mathbf P_h^n),
    \qquad
    T_h:=\Pi_{\mathcal B_h}^{(h)}\circ R_h.
\]
Thus the proof reduces to showing that \(T_h\) is a contraction in a suitable neighbourhood of the discrete approximation \(\widehat{\mathbf P}_h\) of the exact Hessian. There are two separate points to control. The linear step \(R_h\) is handled
through the discrete Miranda-Talenti stability assumption \eqref{eq:MTstab}, while the nonlinear step \(\Pi_{\mathcal B_h}^{(h)}\) is controlled by the smoothness result of \Cref{thm:smooth_projection}, already proven in \Cref{sec:definition}. The consistency assumption \eqref{eq:boundhR_h} ensures that \(\widehat{\mathbf P}_h\) is an approximate fixed point of the linear step. Combining these facts, we prove that \(T_h\) maps a ball of radius \(O(h)\) around \(\widehat{\mathbf P}_h\) into itself and is a strict contraction there.

We now state the resulting local fixed-point theorem.

\begin{theorem}\label{thm:local_fixed_point}
Assume \eqref{eq:elliptic}, \eqref{eq:MTstab} and \eqref{eq:boundhR_h} hold true. Let $\widehat{\mathbf{P}}_h:=I_h^{Z}(D^2u)$. Then, there exist $\kappa_0>0$ and $h_0>0$ such that, $\forall h\leq h_0$, \(T_h\) is a contraction on $\mathcal{U}_{\kappa_0}^h(\widehat{\mathbf{P}}_h):=\{
\mathbf{Q}_h\in Z_h(\mathbb{S}^2):
\|\mathbf{Q}_h-\widehat{\mathbf{P}}_h\|_h\leq \kappa_0 h\}$ and has a locally unique fixed point \(\mathbf{P}_h^\star\in\mathcal{U}_{\kappa_0}^h(\widehat{\mathbf{P}}_h)\). In particular, there exists $q\in (0,1)$, independent of $h$, such that, for every initial datum \(\mathbf{P}_h^0\in \mathcal{U}_{\kappa_0}^h(\widehat{\mathbf{P}}_h)\), 
\begin{equation}
    \|\mathbf{P}_h^n-\mathbf{P}_h^\star\|_h
    \leq
    q^n
    \|\mathbf{P}_h^0-\mathbf{P}_h^\star\|_h,\quad n\geq 0.
\end{equation}
Hence, the iteration \(\mathbf{P}_h^{n+1}=T_h(\mathbf{P}_h^n)\) converges to \(\mathbf{P}_h^\star\). Moreover, there exists $C>0$, independent of $h$, such that 
\begin{equation}\label{eq:errP}
    \|\mathbf{P}_h^\star-\widehat{\mathbf{P}}_h\|_h    \leq  C h^{1+\alpha}.
\end{equation}
\end{theorem}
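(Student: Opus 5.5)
The plan is to apply the Banach fixed-point theorem to \(T_h=\Pi^{(h)}_{\mathcal B_h}\circ R_h\) on the closed ball \(\mathcal U^h_{\kappa_0}(\widehat{\mathbf P}_h)\). We then derive \eqref{eq:errP} from an a posteriori-type estimate. The argument proceeds in four steps.

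\textbf{Step 1: properties of \(R_h\).} \(R_h\) is linear. It is the \((\cdot,\cdot)_h\)-orthogonal projection-type map associated with \eqref{eq:bihC0IP}. Testing \eqref{eq:bihC0IP} with \(v_h=u_h\) gives
\[
\|H_hu_h\|_h^2+\sigma J_h(u_h,u_h)=(\mathbf P_h,H_hu_h)_h,
\]
hence \(\|R_h\mathbf P_h\|_h\le\|\mathbf P_h\|_h\), i.e. \(\|R_h\|_{h\to h}\le1\).

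\textbf{Step 2: confining the ball to the smooth region.} For \(\mathbf Q_h\) in the ball, the inverse estimate \eqref{eq:inverseest} gives
\[
\|R_h\mathbf Q_h-\widehat{\mathbf P}_h\|_{h,\infty}\le C_{\mathrm{inv}}h^{-1}\bigl(\|R_h(\mathbf Q_h-\widehat{\mathbf P}_h)\|_h+\rho_h\bigr)\le C_{\mathrm{inv}}(\kappa_0+C_Rh^{\alpha}).
\]
Choosing \(\kappa_0\) and \(h_0\) small, \(R_h\mathbf Q_h\) lies in \(\mathcal U^\infty_\tau(\widehat{\mathbf P}_h)\) with \(\tau\le\tau_0\), so \Cref{thm:smooth_projection} applies. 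This is exactly why the radius must be \(O(h)\).

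\textbf{Step 3: the contraction estimate.} For \(\mathbf A_h,\mathbf B_h\) in the ball, write
\[
T_h\mathbf A_h-T_h\mathbf B_h=\int_0^1 D\Pi^{(h)}_{\mathcal B_h}(\mathbf X_t)\,dt\;R_h(\mathbf A_h-\mathbf B_h),
\]
with \(\mathbf X_t\) on the segment between \(R_h\mathbf A_h\) and \(R_h\mathbf B_h\). Replace \(D\Pi^{(h)}_{\mathcal B_h}(\mathbf X_t)\) by \(D\Pi^{(h)}_{\mathcal B_h}(\widehat{\mathbf P}_h)=\Pi^{(h)}_{T_{\mathcal B_h}(\widehat{\mathbf P}_h)}\). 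By \eqref{eq:difflip} and Step 2, the error is at most
\[
C_\Pi\,C_{\mathrm{inv}}(\kappa_0+C_Rh^\alpha)\,\|\mathbf A_h-\mathbf B_h\|_h,
\]
which is small. The contraction therefore reduces to the key \textbf{discrete transversality bound}
\[
\bigl\|\Pi^{(h)}_{T_{\mathcal B_h}(\widehat{\mathbf P}_h)}R_h\bigr\|_{h\to h}\le q_0<1 \quad\text{uniformly in } h.
\]
This is the main obstacle; I would attempt it as follows. It suffices to show that for \(\mathbf W_h=R_h\mathbf Z_h=H_hw_h\) (with \(w_h=u_h(\mathbf Z_h)\)), the normal component \(\operatorname{cof}(\widehat{\mathbf P}_h):\mathbf W_h\) at the nodes controls a fixed fraction of \(\|\mathbf W_h\|_h\), up to the \(J_h\) term. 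In 2D the normal direction is \(\operatorname{cof}(\widehat{\mathbf P}_h)\), which is uniformly positive definite by \eqref{eq:elliptic}. Since \(\widehat{\mathbf P}_h\) is spd with eigenvalues in \([\nu_2,\nu_1]\), one has pointwise
\[
|\operatorname{cof}\widehat{\mathbf P}_h:\mathbf W|\ \ge\ \nu_2|\operatorname{tr}\mathbf W|-C|\mathbf W_{\mathrm{dev}}|,
\]
which alone is insufficient. I would instead freeze coefficients: transform locally by \(\widehat{\mathbf P}_h^{1/2}\) or use the Cordes-type condition. Concretely, writing
\[
\mathbf W=\gamma\operatorname{cof}\widehat{\mathbf P}_h+\mathbf W^\perp,
\]
the tangential part satisfies \(\operatorname{cof}\widehat{\mathbf P}_h:\mathbf W^\perp=0\). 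I would use the Cordes inequality
\[
|\mathbf W^\perp|\le\sqrt{1-\varepsilon}\,|\operatorname{tr}\mathbf W|\,\cdot(\text{scaling})
\]
together with the discrete Miranda--Talenti estimate \eqref{eq:MTstab}, which bounds \(\|\mathbf W_h\|\) by \(\|\operatorname{tr}\mathbf W_h\|+CJ_h^{1/2}\). The \(J_h\) term is controlled via Step 1, since \(\sigma J_h(w_h,w_h)\le(\mathbf Z_h-\mathbf W_h,\mathbf W_h)_h\). Combining these gives
\[
\|\Pi_{T}\mathbf W_h\|_h^2\le(1-\varepsilon)\|\mathbf W_h\|_h^2+C\sigma^{-1}\text{-terms}.
\]
Norm equivalence between \(\|\cdot\|_h\) and \(L^2(\mathcal T_h)\) is needed here, and it must be handled carefully since \eqref{eq:MTstab} has constant \(1\) in front of the trace. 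If needed, one passes through the exact projection structure of \(R_h\) (self-adjoint, with \(R_h^2\) close to \(R_h\)) to avoid losing constants.

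\textbf{Step 4: self-mapping and conclusion.} Using \eqref{eq:lip}, and noting that \(\Pi^{(h)}_{\mathcal B_h}\widehat{\mathbf P}_h\) is within \(O(h^2)\) of \(\widehat{\mathbf P}_h\) (indeed \(\widehat{\mathbf P}_h\in\mathcal B_h\) exactly, since \(\det D^2u=f\) at the nodes), we get
\[
\|T_h\mathbf Q_h-\widehat{\mathbf P}_h\|_h\le\|T_h\mathbf Q_h-T_h\widehat{\mathbf P}_h\|_h+\|\Pi^{(h)}_{\mathcal B_h}R_h\widehat{\mathbf P}_h-\Pi^{(h)}_{\mathcal B_h}\widehat{\mathbf P}_h\|_h\le q\kappa_0h+C_\Pi C_Rh^{1+\alpha}.
\]
This is \(\le\kappa_0h\) for \(h\le h_0\). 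Banach's theorem then yields the unique fixed point \(\mathbf P_h^\star\) and the geometric rate. Finally,
\[
\|\mathbf P_h^\star-\widehat{\mathbf P}_h\|_h\le\frac{1}{1-q}\|T_h\widehat{\mathbf P}_h-\widehat{\mathbf P}_h\|_h\le\frac{C_\Pi C_R}{1-q}h^{1+\alpha},
\]
which is \eqref{eq:errP}.
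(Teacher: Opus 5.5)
Your Steps 1, 2 and 4, and the reduction in Step 3 of the contraction estimate to the uniform bound $\|\Pi^{(h)}_{T_{\mathcal B_h}(\widehat{\mathbf P}_h)}R_h\|_{h\to h}\le q_0<1$, coincide with the paper's proof. The gap is that transversality bound itself (\Cref{lem:transverlality}), which carries the substance, and your sketch does not establish it.

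First, the pointwise inequality $|\mathbf W^\perp|\le\sqrt{1-\varepsilon}\,|\operatorname{tr}\mathbf W|$ is false under any scaling. The conditions $\operatorname{cof}\widehat{\mathbf P}_h:\mathbf W=0$ and $\operatorname{tr}\mathbf W=0$ are two linear conditions on the three-dimensional space $\mathbb S^2$, so there is always a nonzero tangential $\mathbf W$ with zero trace. The Cordes inequality that works goes the other way. With $\mathbf C_h:=\operatorname{cof}\widehat{\mathbf P}_h$ and $\gamma_h:=\operatorname{tr}\mathbf C_h/|\mathbf C_h|^2$, one has $|\operatorname{tr}\mathbf Q-\gamma_h\mathbf C_h:\mathbf Q|\le\sqrt{1-\varepsilon}\,|\mathbf Q|$: the trace is controlled by the \emph{normal} component plus a strict fraction of $|\mathbf Q|$. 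Insert this into \eqref{eq:MTstab} and absorb in $L^2(\mathcal T_h)$. This is where the constant $1$ in front of the trace is used, before any change of norm. You get $\|\mathbf Q_h\|_{L^2(\mathcal T_h)}\lesssim\|\mathbf C_h:\mathbf Q_h\|_{L^2(\mathcal T_h)}+J_h^{1/2}$.

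The norm issue you flag then concerns only $\mathbf C_h:\mathbf Q_h$. It has degree $2m$, so $\|\cdot\|_h$ is not even a norm on it. The paper freezes $\mathbf C_h$ at one point per element, uses stability of $I_h^Z$ on $\mathbb P_{2m}$ and $u\in W^{3,\infty}(\Omega)$, and obtains $\|\mathbf C_h:\mathbf Q_h\|_{L^2(\mathcal T_h)}\lesssim\|\mathbf C_h:\mathbf Q_h\|_h+h\|\mathbf Q_h\|_{L^2(\mathcal T_h)}$. The $O(h)$ term is then absorbed for small $h$ (\Cref{lemma:TMimpliesCordes}). Your fallback via ``$R_h^2$ close to $R_h$'' is not available. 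For $J_h\neq0$, $R_h$ is self-adjoint and contractive in $(\cdot,\cdot)_h$, but nothing makes it close to idempotent, and no such property is needed.

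Second, your combined bound, $(1-\varepsilon)\|\mathbf W_h\|_h^2$ plus $\sigma^{-1}$-terms, is measured against the output $\mathbf W_h=R_h\mathbf Z_h$. The $\sigma^{-1}$-terms are not small, since $\sigma$ is fixed. What closes the argument is that a large penalty contribution also shrinks $\|\mathbf W_h\|_h$ relative to $\|\mathbf Z_h\|_h$.

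Testing \eqref{eq:bihC0IP} with $w_h$ gives $(1+\sigma s)\|\mathbf W_h\|_h\le\|\mathbf Z_h\|_h$, with $s:=J_h(w_h,w_h)/\|\mathbf W_h\|_h^2$. Let $\mathbf N_h$ be the normal part of $\mathbf W_h$. Then $\mathbf C_h:\mathbf W_h=\mathbf C_h:\mathbf N_h$ at the nodes, so the Cordes estimate yields $1\le C_1r^2+C_2s$ with $r^2:=\|\mathbf N_h\|_h^2/\|\mathbf W_h\|_h^2$. One of two cases holds:
\begin{itemize}
\item $r^2\ge1/(2C_1)$, and the tangential part is at most $\sqrt{1-1/(2C_1)}\,\|\mathbf Z_h\|_h$;
\item $s>1/(2C_2)$, and $\|\mathbf W_h\|_h\le(1+\sigma/(2C_2))^{-1}\|\mathbf Z_h\|_h$.
\end{itemize}
Your ingredients, $\sigma J_h(w_h,w_h)=(\mathbf Z_h-\mathbf W_h,\mathbf W_h)_h$ and $\|R_h\|_{h\to h}\le1$, suffice to run this dichotomy. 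As written, however, the proposal leaves the uniform $q_0<1$, and hence the contraction, unproved.
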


Before proving \Cref{thm:local_fixed_point}, we isolate two estimates needed in the fixed-point argument. First, in \Cref{lemma:TMimpliesCordes}, we show that the discrete Miranda-Talenti estimate \eqref{eq:MTstab} implies a Cordes-type stability estimate in the discrete norm \(\|\cdot\|_h\). This provides the required control on the linear reconstruction step. Second, in \Cref{lem:transverlality}, we prove a transversality estimate near \(\widehat{\mathbf P}_h\), which is the mechanism responsible for the contraction of the full map $T_h$.

\begin{lemma}\label{lemma:TMimpliesCordes}
    Assume \eqref{eq:elliptic} and \eqref{eq:MTstab}. Then, there exist $h_0>0$ and $C_D>0$, independent of $h$, such that, $\forall h\leq h_0$,
    \begin{equation}
          \|H_hv_h\|_h^2
    \le
        C_D\big(\|\operatorname{cof}(\widehat{\mathbf{P}}_h):H_hv_h\|_h^2
        + J_h(v_h,v_h)\big)
    \qquad
    \forall v_h\in X_h .
    \end{equation}
\end{lemma}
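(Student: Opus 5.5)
The plan is a Cordes-type absorption argument. We work first in \(L^2(\mathcal T_h)\), where \eqref{eq:MTstab} holds with the sharp constant \(1\) in front of the trace, and pass to \(\|\cdot\|_h\) only at the very end. The reason for this order is that the equivalence between \(\|\cdot\|_h\) and \(\|\cdot\|_{L^2(\mathcal T_h)}\) on \(Z_h\) carries \(h\)-independent but non-unit constants. Absorbing a term \(\sqrt{1-\varepsilon}\,\|M\|\) requires the constant in front of the trace to be exactly \(1\), so we must not change norms before the absorption.

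\emph{Step 1 (pointwise Cordes inequality).} For a symmetric positive definite \(A\in\mathbb S^2\) with eigenvalues \(\lambda_1,\lambda_2\in[\mu_2,\mu_1]\), set \(\gamma:=\operatorname{tr}A/|A|^2\). A direct computation gives
\[
|\gamma A-I|^2 = 2-\frac{(\operatorname{tr}A)^2}{|A|^2}\le 1-\varepsilon,
\qquad
\varepsilon:=\frac{2\lambda_1\lambda_2}{\lambda_1^2+\lambda_2^2}\ge \frac{2\mu_2^2}{2\mu_1^2}>0 .
\]
Hence, for every \(M\in\mathbb S^2\),
\[
|\operatorname{tr}M|\le \gamma\,|A:M|+\sqrt{1-\varepsilon}\,|M|,
\qquad
\gamma\le \frac{2\mu_1}{2\mu_2^2}.
\]
In two dimensions \(\operatorname{cof}(P)=\operatorname{tr}(P)I-P\) has the same eigenvalues as \(P\). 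By \eqref{eq:elliptic}, this applies to \(A=\operatorname{cof}(D^2u(x))\) with \(\mu_2=\nu_2\) and \(\mu_1=\nu_1\).

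\emph{Step 2 (frozen coefficients and absorption).} To keep \(A:M\) inside \(Z_h\), freeze the coefficient elementwise: for each \(K\), let \(A_K:=\operatorname{cof}(D^2u(x_K))\) for a fixed point \(x_K\in K\), and let \(\gamma_K\) be the corresponding constant. Put \(M:=H_hv_h\in Z_h(\mathbb S^2)\). Applying Step 1 pointwise on each \(K\) and summing gives
\[
\|\operatorname{tr}M\|_{L^2(\mathcal T_h)}\le \bar\gamma\,\|\mathbf A:M\|_{L^2(\mathcal T_h)}+\sqrt{1-\varepsilon}\,\|M\|_{L^2(\mathcal T_h)},
\]
where \(\mathbf A|_K=A_K\). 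Inserting this into \eqref{eq:MTstab} and absorbing the last term yields
\[
\|M\|_{L^2(\mathcal T_h)}\le \bigl(1-\sqrt{1-\varepsilon}\bigr)^{-1}\bigl(\bar\gamma\,\|\mathbf A:M\|_{L^2(\mathcal T_h)}+C J_h(v_h,v_h)^{1/2}\bigr).
\]

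\emph{Step 3 (return to the nodal norm).} Since \(\mathbf A:M\in Z_h\), the norm equivalence on \(Z_h\) gives \(\|\mathbf A:M\|_{L^2(\mathcal T_h)}\le C\|\mathbf A:M\|_h\). At each node we then replace \(A_K\) by \(\operatorname{cof}(\widehat{\mathbf P}_h(x_{K,j}))=\operatorname{cof}(D^2u(x_{K,j}))\). Because \(\operatorname{cof}\) is linear in two dimensions and \(D^2u\in W^{1,\infty}\), the difference is at most \(Ch\,|D^2u|_{W^{1,\infty}}\). Therefore
\[
\|\mathbf A:M\|_h\le \|\operatorname{cof}(\widehat{\mathbf P}_h):M\|_h+Ch\|M\|_h .
\]
Using the norm equivalence once more on the left-hand side, we obtain
\[
\|M\|_h\le C_1\|\operatorname{cof}(\widehat{\mathbf P}_h):M\|_h+C_2h\|M\|_h+C_3J_h(v_h,v_h)^{1/2}.
\]
For \(h\le h_0\) with \(C_2h_0\le 1/2\), the middle term is absorbed, and squaring gives the claimed estimate.

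The main obstacle is the bookkeeping in Steps 2 and 3: the absorption has to happen in \(L^2(\mathcal T_h)\), before any norm equivalence constant appears, and the coefficient has to be frozen so that \(\mathbf A:M\) stays in \(Z_h\). If one used the nodal coefficient \(\operatorname{cof}(\widehat{\mathbf P}_h)\) directly inside the integral, the product would have degree \(2m\), and the nodal norm is not controlled on that space. Freezing at \(x_K\) is what makes the \(O(h)\) perturbation, and hence the restriction \(h\le h_0\), necessary.
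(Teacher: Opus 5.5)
Your proof is correct and has the same skeleton as the paper's proof. You use the pointwise Cordes inequality, insert it into \eqref{eq:MTstab}, and absorb the \(\sqrt{1-\varepsilon}\) term in \(L^2(\mathcal T_h)\) before changing norms. Then you freeze the cofactor elementwise and absorb the resulting \(O(h)\) defect for \(h\le h_0\).

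The only real difference is where the freezing happens.

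\textbf{The paper's order.} It applies the Cordes inequality with the variable coefficient \(\mathbf C_h=\operatorname{cof}(\widehat{\mathbf P}_h)\), which needs \(h\) small so that the interpolated field is uniformly elliptic. Only afterwards does it freeze \(\mathbf C_h\) at \(x_K\). It then uses the stability of \(I_h^Z\) on \(\mathbb P_{2m}(K)\) and \(\|\nabla\mathbf C_h\|_{L^\infty}\lesssim\|u\|_{W^{3,\infty}}\). Together these bound the \(L^2\) norm of the degree-\(2m\) product \(\mathbf C_h:\mathbf Q_h\) by its nodal norm plus \(h\|\mathbf Q_h\|\).

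\textbf{Your order.} You freeze first, at the exact Hessian \(D^2u(x_K)\). The product then stays in \(Z_h\), so the plain norm equivalence on \(Z_h\) suffices. The \(O(h)\) perturbation is done node by node, using only the Lipschitz continuity of \(D^2u\) and \(\widehat{\mathbf P}_h(x_{K,j})=D^2u(x_{K,j})\).

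Your route is slightly more economical: it needs no interpolation stability on \(\mathbb P_{2m}\) and no ellipticity argument for \(\operatorname{cof}(\widehat{\mathbf P}_h)\) away from the nodes. In your version, \(h_0\) is needed only for the final absorption.

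Your closing remark overstates the obstacle, though. The paper does put \(\operatorname{cof}(\widehat{\mathbf P}_h)\) inside the integral, and it handles the degree-\(2m\) product with exactly the same kind of elementwise freezing.
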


\begin{proof}
Set \(\mathbf{Q}_h:=H_hv_h\) and \(\mathbf{C}_h:=\operatorname{cof}(\widehat{\mathbf{P}}_h)\). By \eqref{eq:elliptic} and standard interpolation estimates in $L^\infty$ \cite{brenner_fem}, for \(h\) small enough, \(\mathbf{C}_h\) is uniformly elliptic and uniformly bounded. Therefore, the Cordes estimate holds uniformly (see, for instance, \cite{campanato,smearssuli}): there exist \(\varepsilon\in(0,1)\) and a uniformly bounded scalar function \(\gamma_h:=\frac{\operatorname{tr}\mathbf{C}_h}{|\mathbf{C}_h|^2}\) such that, pointwise on each element,
\[
\left|\operatorname{tr}\mathbf{Q}_h-\gamma_h \mathbf{C}_h:\mathbf{Q}_h\right|
\le \sqrt{1-\varepsilon}\,|\mathbf{Q}_h|.
\]
Using \eqref{eq:MTstab}, we obtain
\[
\|\mathbf{Q}_h\|_{L^2(\mathcal T_h)}
\le
C\|\mathbf{C}_h:\mathbf{Q}_h\|_{L^2(\mathcal T_h)}
+\sqrt{1-\varepsilon}\|\mathbf{Q}_h\|_{L^2(\mathcal T_h)}
+CJ_h(v_h,v_h)^{1/2},
\]
with $C>0$. Absorbing the second term gives
\begin{equation}\label{eq:cordes_interm}
    \|\mathbf{Q}_h\|_{L^2(\mathcal T_h)}
\lesssim \|\mathbf{C}_h:\mathbf{Q}_h\|_{L^2(\mathcal T_h)}
+J_h(v_h,v_h)^{1/2}.
\end{equation}
It remains to replace the norm in \(L^2(\mathcal{T}_h)\) of \(\mathbf{C}_h:\mathbf{Q}_h\) by the nodal norm. We use the
standard local stability of nodal interpolation on fixed polynomial spaces (see for instance \cite[Chapter 4.4]{brenner_fem}):
\begin{equation}\label{eq:stab_interp}
\|I_h^Z r\|_{L^2(K)}\le C\|r\|_{L^2(K)}
\qquad \forall r\in \mathbb{P}_{2m}(K),    
\end{equation}
with \(C\) independent of \(h\). For each element \(K\), choose \(x_K\in K\) and set \(C_K:=\mathbf{C}_h(x_K)\). Since
\(C_K:\mathbf{Q}_h\in \mathbb{P}_m(K)\), we have \(I_h^Z(C_K:\mathbf{Q}_h)=C_K:\mathbf{Q}_h\), hence
\[
(I-I_h^Z)(\mathbf{C}_h:\mathbf{Q}_h)
=
(I-I_h^Z)\big((\mathbf{C}_h-C_K):\mathbf{Q}_h\big)
\quad\text{on }K.
\]
By \eqref{eq:stab_interp} and the linearity of the cofactor map in 2D,
\begin{align}
\|(I-I_h^Z)(\mathbf{C}_h:\mathbf{Q}_h)\|_{L^2(K)}
\lesssim &
\|\mathbf{C}_h-C_K\|_{L^\infty(K)}\|\mathbf{Q}_h\|_{L^2(K)}\\
\lesssim&h_K\|\nabla \mathbf{C}_h\|_{L^\infty(K)}\|\mathbf{Q}_h\|_{L^2(K)}\\
\lesssim &h_K\|u\|_{W^{3,\infty}(\Omega)}\|\mathbf{Q}_h\|_{L^2(K)}
\end{align}
Consequently,
\begin{equation}\label{eq:boundd}
\|\mathbf{C}_h:\mathbf{Q}_h\|_{L^2(\mathcal T_h)}
\lesssim
\|\mathbf{C}_h:\mathbf{Q}_h\|_h
+h\|\mathbf{Q}_h\|_{L^2(\mathcal T_h)}.    
\end{equation}
Combining \eqref{eq:cordes_interm} and \eqref{eq:boundd}, and absorbing the last term for \(h\) small enough, yields the desired result.
\end{proof}

We next prove that the range of $R_h$ is transversal to the tangent space of $\mathcal{B}_h$ at $\widehat{\mathbf{P}}_h$. This is the discrete analogue of the corresponding transversality result established in \cite{maxanna} for the continuous setting by means of elliptic regularity results.

\begin{lemma}\label{lem:transverlality}
Let the assumptions of \Cref{lemma:TMimpliesCordes} hold. Then, for any $h>0$, $\|R_h\|_{h\to h}\leq 1$. Moreover, there exist $h_0>0$ and a constant \(q_0\in(0,1)\), independent of \(h\), such that, $\forall h\leq h_0$:
\[
    \left\|
        \Pi_{T_{\mathcal{B}_h}(\widehat{\mathbf{P}}_h)}^{(h)} \circ R_h
    \right\|_{h\to h}
    \le q_0 .
\]
\end{lemma}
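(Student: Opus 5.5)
The plan has two parts: first show that $R_h$ is an orthogonal-type projection in $(\cdot,\cdot)_h$, then argue by contradiction-free quantitative estimates using \Cref{lemma:TMimpliesCordes}.

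\emph{Step 1: $\|R_h\|_{h\to h}\le 1$.}
Given $\mathbf P_h$, let $u_h=u_h(\mathbf P_h)$ and test \eqref{eq:bihC0IP} with $v_h=u_h$. This gives
\[
\|R_h\mathbf P_h\|_h^2+\sigma J_h(u_h,u_h)=(\mathbf P_h,R_h\mathbf P_h)_h\le \|\mathbf P_h\|_h\|R_h\mathbf P_h\|_h .
\]
Since $J_h\ge0$, we get $\|R_h\mathbf P_h\|_h\le\|\mathbf P_h\|_h$ for every $h$. We also record the sharper identity
\[
\|R_h\mathbf P_h\|_h^2+\sigma J_h(u_h,u_h)\le \|\mathbf P_h\|_h\|R_h\mathbf P_h\|_h,
\]
which will be used below.

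\emph{Step 2: the transversality bound.}
Write $\Pi_T:=\Pi^{(h)}_{T_{\mathcal B_h}(\widehat{\mathbf P}_h)}$ and $\mathbf C_h:=\operatorname{cof}(\widehat{\mathbf P}_h)$. Nodewise, $T_{\mathcal B_h}$ is the orthogonal complement of the line spanned by $\mathbf C_h(x_{K,j})$. Hence, for $\mathbf Q_h=R_h\mathbf P_h=H_hu_h$,
\[
\|\Pi_T\mathbf Q_h\|_h^2=\|\mathbf Q_h\|_h^2-\Big\|\frac{\mathbf C_h:\mathbf Q_h}{|\mathbf C_h|}\Big\|_h^2 .
\]
Here $|\mathbf C_h|$ is evaluated at nodes and is bounded above and below uniformly, since $\widehat{\mathbf P}_h$ lies between $\nu_2$ and $\nu_1$ at the nodes. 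Therefore
\[
\|\Pi_T\mathbf Q_h\|_h^2\le\|\mathbf Q_h\|_h^2-c\|\mathbf C_h:\mathbf Q_h\|_h^2
\]
with $c=1/\max|\mathbf C_h|^2$. By \Cref{lemma:TMimpliesCordes},
\[
c\|\mathbf C_h:\mathbf Q_h\|_h^2\ge \frac{c}{C_D}\|\mathbf Q_h\|_h^2-c\,J_h(u_h,u_h).
\]
This leaves a $J_h$ term with the wrong sign. To handle it, use Step 1 to bound $\sigma J_h(u_h,u_h)\le \|\mathbf P_h\|_h\|\mathbf Q_h\|_h-\|\mathbf Q_h\|_h^2$.

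\emph{Step 3: case split.}
Normalize $\|\mathbf P_h\|_h=1$ and set $t:=\|\mathbf Q_h\|_h\le1$. Then
\[
\|\Pi_T\mathbf Q_h\|_h^2\le t^2(1-\delta)+\tfrac{c}{\sigma}(t-t^2),\qquad \delta:=\frac{c}{C_D}.
\]
If $t\le 1-\eta$ for a fixed small $\eta$, then trivially $\|\Pi_T\mathbf Q_h\|_h\le t\le1-\eta$. If $t>1-\eta$, then $t-t^2\le\eta$, and the bound is at most $1-\delta(1-\eta)^2+c\eta/\sigma$. Choosing $\eta$ small relative to $\delta\sigma/c$ makes this strictly below $1$. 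Taking $q_0^2$ as the maximum of the two cases yields a $q_0<1$ independent of $h$, valid for $h\le h_0$ with $h_0$ from \Cref{lemma:TMimpliesCordes}.

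\emph{Main obstacle.}
The delicate point is the stabilization term $J_h$, which enters \Cref{lemma:TMimpliesCordes} with the wrong sign. It is controlled only through the energy identity of Step 1, which ties $J_h$ to the deficit $\|\mathbf P_h\|_h\|\mathbf Q_h\|_h-\|\mathbf Q_h\|_h^2$; the case split in Step 3 is what makes this usable. A secondary check is that the constants $C_D$, $c$ and $\sigma$ are all $h$-independent, so that $q_0$ is as well.
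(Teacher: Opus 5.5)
Your proof is correct and follows essentially the same route as the paper. Both arguments rest on the energy inequality from testing with $u_h$, the nodewise orthogonal decomposition relative to $\operatorname{cof}(\widehat{\mathbf P}_h)$, and the Cordes-type bound of \Cref{lemma:TMimpliesCordes}, closed by a two-case argument. The differences are only in bookkeeping: the paper uses the energy bound multiplicatively as $(1+\sigma s)\|\mathbf Q_h\|_h\le\|\mathbf W_h\|_h$ and splits on the size of the normal fraction $r^2$, whereas you use it additively as $\sigma J_h(u_h,u_h)\le t-t^2$ and split on $t=\|R_h\mathbf P_h\|_h/\|\mathbf P_h\|_h$.
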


\begin{proof}
Set \(\mathbf{C}_h := \operatorname{cof}(\widehat{\mathbf{P}}_h)\). Let \(\mathbf{W}_h\in Z_h(\mathbb{S}^2)\) be arbitrary. Let \(u_h\in X_h\) solve \eqref{eq:bihC0IP} with right-hand side $\mathbf{W}_h$. Set \(\mathbf{Q}_h := R_h\mathbf{W}_h = H_hu_h\). Taking \(v_h=u_h\) in
\eqref{eq:bihC0IP}, we obtain
\[
    \|\mathbf{Q}_h\|_h^2
    +
    \sigma J_h(u_h,u_h)
    \le
    \|\mathbf{W}_h\|_h\|\mathbf{Q}_h\|_h .
\]
In particular, since \(J_h\ge0\), we have
\begin{equation}\label{eq:boundQh}
        (1 + \sigma s)\|\mathbf{Q}_h\|_h\le \|\mathbf{W}_h\|_h,\quad s :=
    \frac{J_h(u_h,u_h)}{\|\mathbf{Q}_h\|_h^2}.
\end{equation}
Therefore, we proved $\|R_h\|_{h\to h}\leq 1$. If \(\mathbf{Q}_h=0\), then the desired estimate is trivial. Hence, we assume \(\mathbf{Q}_h\neq0\). We decompose \(\mathbf{Q}_h\) into its tangent and normal components with respect to
\(T_{\mathcal{B}_h}(\widehat{\mathbf{P}}_h)\):
\[
    \mathbf{Z}_h := \Pi_{T_{\mathcal{B}_h}(\widehat{\mathbf{P}}_h)}^{(h)}\mathbf{Q}_h,
    \qquad
    \mathbf{N}_h := \mathbf{Q}_h-\mathbf{Z}_h .
\]
Since \(\Pi_{T_{\mathcal{B}_h}(\widehat{\mathbf{P}}_h)}^{(h)}\) is the \(\|\cdot\|_h\)-orthogonal projection, we have \(\|\mathbf{Q}_h\|_h^2=\|\mathbf{Z}_h\|_h^2+\|\mathbf{N}_h\|_h^2\). In particular,  
\begin{equation}\label{eq:boundzh}
    \|\mathbf{Z}_h\|_h
    =
    \sqrt{1-r^2}\,\|\mathbf{Q}_h\|_h ,\quad     r^2 :=
    \frac{\|\mathbf{N}_h\|_h^2}{\|\mathbf{Q}_h\|_h^2},
\end{equation}
with, by construction, \(0\le r^2\le1\). Using \eqref{eq:boundQh}, we obtain 
\[
    \|\mathbf{Z}_h\|_h
    \le
    \frac{\sqrt{1-r^2}}{1+\sigma s}
    \|\mathbf{W}_h\|_h .
\]
To conclude the proof, we need to verify that $\frac{\sqrt{1-r^2}}{1+\sigma s}\leq q_0<1$. We now relate the scalar quantity \(\mathbf{C}_h:\mathbf{Q}_h\) to the normal component \(\mathbf{N}_h\). By definition of the tangent space \(T_{\mathcal{B}_h}(\widehat{\mathbf{P}}_h)\), \(\mathbf{C}_h:\mathbf{Z}_h=0\) at every local node. Therefore, we get
\[
    \|\mathbf{C}_h:\mathbf{Q}_h\|_h^2
    =
    \|\mathbf{C}_h:\mathbf{N}_h\|_h^2
    \le
    2\nu_1^2\|\mathbf{N}_h\|_h^2 .
\]
Using \eqref{eq:MTstab} and \Cref{lemma:TMimpliesCordes}, we
obtain
\[
    \|\mathbf{Q}_h\|_h^2
    \le
    2C_D\nu_1^2\|\mathbf{N}_h\|_h^2
    +
    C_DJ_h(u_h,u_h).
\]
Dividing by \(\|\mathbf{Q}_h\|_h^2\) yields
\begin{equation}\label{eq:rs}
      1
    \le
    C_1 r^2 + C_2 s ,  
\end{equation}
where $C_1:=2C_D\nu_1^2$ and $C_2:=C_D$. We may assume \(C_1\ge1\) and
\(C_2\ge1\).

We distinguish three cases. First, we suppose $s>0$ and  \(r^2\ge \frac{1}{2C_1}\). Then
\[\sqrt{1-r^2}\le\sqrt{1-\frac{1}{2C_1}}.\]
Since \(1+\sigma s\ge1\), estimate \eqref{eq:boundzh} gives
\[
    \|\mathbf{Z}_h\|_h
    \le
    \sqrt{1-\frac{1}{2C_1}}
    \|\mathbf{W}_h\|_h .
\]
If $s>0$ and \(r^2<\frac{1}{2C_1}\), then \eqref{eq:rs} implies
\[
    1
    \le
    C_1r^2+C_2s
    <
    \frac12+C_2s \implies  s>\frac{1}{2C_2}.
\]
Therefore, we obtain
\[
    \|\mathbf{Z}_h\|_h
    \le
    \frac{1}{1+\sigma/(2C_2)}
    \|\mathbf{W}_h\|_h .
\]
Now, we suppose $s=0$, then $r^2\geq \frac{1}{C_1}$ and hence 
$$\|\mathbf{Z}_h\|_h \leq \sqrt{1 - \frac{1}{C_1}}\|\mathbf{W}_h\|_h.$$
Hence, we get
\[
    \|\mathbf{Z}_h\|_h
    \le
    q_0\|\mathbf{W}_h\|_h,\quad     q_0
    :=\max
    \left\{
        \sqrt{1-\frac{1}{2C_1}},
        \frac{1}{1+\sigma/(2C_2)}
    \right\}\in (0,1).
\]
Since
\[
    \mathbf{Z}_h
    =
    \Pi_{T_{\mathcal{B}_h}(\widehat{\mathbf{P}}_h)}^{(h)}\mathbf{Q}_h
    =
    \Pi_{T_{\mathcal{B}_h}(\widehat{\mathbf{P}}_h)}^{(h)}R_h\mathbf{W}_h ,
\]
we have proved
\[
    \left\|
        \Pi_{T_{\mathcal{B}_h}(\widehat{\mathbf{P}}_h)}^{(h)}R_h\mathbf{W}_h
    \right\|_h
    \le
    q_0\|\mathbf{W}_h\|_h
    \qquad
    \forall \mathbf{W}_h\in Z_h(\mathbb{S}^2) .
\]
Taking the supremum over all \(\|\mathbf{W}_h\|_h = 1\) gives the desired result.
\end{proof}

We now combine \Cref{thm:smooth_projection} and \Cref{lem:transverlality} to prove \Cref{thm:local_fixed_point}. The transversality estimate gives the contraction mechanism after projection onto \(\mathcal B_h\). The point requiring care is the compatibility between the two norms used in the argument. Indeed, the smoothness of the discrete projection \(\Pi_{\mathcal B_h}^{(h)}\), proved in \Cref{thm:smooth_projection}, holds in a neighbourhood defined with respect to the $\|\cdot\|_{h,\infty}$ norm. By contrast, the fixed-point argument is carried out in the \(\|\cdot\|_h\)-ball.

\begin{proof}[Proof of \Cref{thm:local_fixed_point}]
Let \(\mathbf{Q}_h\in \mathcal{U}_\kappa^h(\widehat{\mathbf{P}}_h)\). First, we prove that \(R_h(\mathbf{Q}_h)\in\mathcal{U}_\tau^{\infty}(\widehat{\mathbf{P}}_h)\) for \(h\) and $\kappa$ small enough. Indeed, using the inequality \eqref{eq:inverseest} and the fact that \(\|R_h\|_{h\to h}\leq 1\) by \Cref{lem:transverlality}, we obtain
\begin{align}\label{eq:estim_res}
\|R_h(\mathbf{Q}_h)-\widehat{\mathbf{P}}_h\|_{h,\infty}
&\leq
\|R_h(\mathbf{Q}_h-\widehat{\mathbf{P}}_h)\|_{h,\infty}
+
\|R_h(\widehat{\mathbf{P}}_h)-\widehat{\mathbf{P}}_h\|_{h,\infty}
\notag \\ 
&\leq
C_{\text{inv}}h^{-1}
\|R_h(\mathbf{Q}_h-\widehat{\mathbf{P}}_h)\|_h
+
C_{\text{inv}}h^{-1}\|R_h(\widehat{\mathbf{P}}_h)-\widehat{\mathbf{P}}_h\|_{h}
\notag \\ 
&\leq C_{\text{inv}}(\kappa + C_Rh^{\alpha}).
\end{align}
Hence, \(R_h(\mathbf{Q}_h)\in\mathcal{U}_\tau^{\infty}(\widehat{\mathbf{P}}_h)\) for $h$ and $\kappa$ small. We now bound the derivative of $T_h$ in $\mathcal{U}_\kappa^h(\widehat{\mathbf{P}}_h)$. Since
\[
DT_h(\mathbf{Q}_h)
=
D\Pi_{\mathcal{B}_h}^{(h)}(R_h(\mathbf{Q}_h))\circ R_h,
\]
by using \Cref{thm:smooth_projection}, \Cref{lem:transverlality} and \eqref{eq:estim_res}, we obtain
\begin{align*}
\|DT_h(\mathbf{Q}_h)\|_{h\to h}
&\leq
\|
\Pi^{(h)}_{T_{\mathcal{B}_h}(\widehat{\mathbf{P}}_h)}
\circ R_h
\|_{h\to h}
\\
&\quad
+
\|
(
D\Pi_{\mathcal{B}_h}^{(h)}(R_h(\mathbf{Q}_h))
-
D\Pi_{\mathcal{B}_h}^{(h)}(\widehat{\mathbf{P}}_h)
)
\circ R_h
\|_{h\to h}\\
&\leq
q_0
+
C_\Pi 
\|R_h(\mathbf{Q}_h)-\widehat{\mathbf{P}}_h\|_{h,\infty}
\\
&\leq
q_0
+
C_\Pi C_{\text{inv}}(\kappa + C_Rh^{\alpha}).
\end{align*}
Hence, for $\kappa$ and $h$ small, there exists $q\in (0,1)$ such that
\begin{equation}\label{eq:deri}
 \sup_{\mathbf{Q}_h\in \mathcal{U}_\kappa^h(\widehat{\mathbf{P}}_h)}
    \|DT_h(\mathbf{Q}_h)\|_{h\to h}
    \leq
    q<1 ,
\end{equation}
and 
\[
\|T_h(\mathbf{Q}_h)-T_h(\mathbf{Z}_h)\|_h
\leq
q
\|\mathbf{Q}_h-\mathbf{Z}_h\|_h
\qquad
\forall \mathbf{Q}_h,\mathbf{Z}_h\in \mathcal{U}_\kappa^h(\widehat{\mathbf{P}}_h).
\]
In order to apply the contraction mapping theorem to $T_h$, we have to prove that it maps \(\mathcal{U}_\kappa^h(\widehat{\mathbf{P}}_h)\) into itself. We define $\varepsilon_h := \|T_h(\widehat{\mathbf{P}}_h)-\widehat{\mathbf{P}}_h\|_h$ and we have:
\begin{align}\label{eq:resestimate}
\varepsilon_h 
    \nonumber    =
    \|\Pi_{\mathcal{B}_h}^{(h)}(R_h(\widehat{\mathbf{P}}_h))
    -
    \Pi_{\mathcal{B}_h}^{(h)}(\widehat{\mathbf{P}}_h)\|_h
    \nonumber
    &\leq
    C_\Pi
    \|R_h(\widehat{\mathbf{P}}_h)-\widehat{\mathbf{P}}_h\|_h
    \nonumber\\
    &=
    C_\Pi \rho_h
    \leq
    C_{\Pi} C_Rh^{1+\alpha}.
\end{align}
If $h$ is small enough, then \(\varepsilon_h \leq (1-q)\kappa h\).
Therefore, for every \(\mathbf{Q}_h\in \mathcal{U}_\kappa^h(\widehat{\mathbf{P}}_h)\),
\begin{align*}
\|T_h(\mathbf{Q}_h)-\widehat{\mathbf{P}}_h\|_h
&\leq
\|T_h(\mathbf{Q}_h)-T_h(\widehat{\mathbf{P}}_h)\|_h
+
\|T_h(\widehat{\mathbf{P}}_h)-\widehat{\mathbf{P}}_h\|_h
\\
&\leq
q\|\mathbf{Q}_h-\widehat{\mathbf{P}}_h\|_h
+
\varepsilon_h
\\
&\leq
q \kappa h+(1-q)\kappa h
=
\kappa h.
\end{align*}
Thus, \(T_h\) maps \(\mathcal{U}_\kappa^h(\widehat{\mathbf{P}}_h)\) into itself and \(T_h\) is a contraction on \(\mathcal{U}_\kappa^h(\widehat{\mathbf{P}}_h)\).  By Banach's fixed-point theorem, there exists a unique fixed point \(\mathbf{P}_h^\star\in \mathcal{U}_\kappa^h(\widehat{\mathbf{P}}_h)\). Furthermore,
\begin{align*}
\|\mathbf{P}_h^\star-\widehat{\mathbf{P}}_h\|_h
&\leq
\|T_h(\mathbf{P}_h^\star)-T_h(\widehat{\mathbf{P}}_h)\|_h
+
\|T_h(\widehat{\mathbf{P}}_h)-\widehat{\mathbf{P}}_h\|_h
\\
&\leq
q
\|\mathbf{P}_h^\star-\widehat{\mathbf{P}}_h\|_h
+
\varepsilon_h.
\end{align*}
Absorbing the first term gives \eqref{eq:errP}. The convergence of the iteration follows from the contraction property.
\end{proof}

\begin{remark}\label{rem:valuealpha}
In the proof of \Cref{thm:local_fixed_point}, the assumption \(\alpha>0\) is used to ensure that
\[
    h^{-1}\rho_h \leq C_R h^\alpha .
\]
The same argument remains valid in the borderline case \(\alpha=0\), provided that \(C_R\) is sufficiently small. As discussed in \Cref{sec:consistency}, the constant \(C_R\) may depend on \(|u|_{H^3(\Omega)}\). Therefore, under an additional smallness assumption on the corresponding norm of the exact solution, the convergence result can also be applied in the case \(\alpha=0\).
\end{remark}

The following corollary shows that, along the alternating minimization scheme, the distance to the fixed point can be estimated in terms of the computable residuals \( \mathbf P_h^n-R_h(\mathbf P_h^n) \) and \( \mathbf{P}_h^{n+1} -\mathbf{P}_h^n \).

\begin{corollary}\label{cor:res}
    Under the assumptions of \Cref{thm:local_fixed_point}, for $n\geq 1$, the following bounds hold: 
    \begin{align}
        \frac{1}{2}\|\mathbf{P}_h^{n+1} -\mathbf{P}_h^n\|_h\leq \|&\mathbf{P}^n_h - R_h(\mathbf{P}^n_h)\|_h\leq 2\|\mathbf{P}^n_h - \mathbf{P}^\star_h\|_h + C\rho_h \\
        \|\mathbf{P}^n_h - \mathbf{P}^\star_h\|_h\leq \frac{1}{1-q }\|&\mathbf{P}_h^{n+1} -\mathbf{P}_h^n\|_h\leq \frac{2}{1-q }\|R_h(\mathbf{P}_h^n)-\mathbf{P}_h^n\|_h. 
    \end{align}
\end{corollary}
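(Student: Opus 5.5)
Throughout, the iterates $\mathbf P_h^n=T_h^n(\mathbf P_h^0)$ remain in $\mathcal U^h_{\kappa_0}(\widehat{\mathbf P}_h)$ by \Cref{thm:local_fixed_point}. Hence $T_h$ is a $q$-contraction along the whole sequence, and $R_h\mathbf P_h^n\in\mathcal U_\tau^\infty(\widehat{\mathbf P}_h)$ by \eqref{eq:estim_res}. Three ingredients will be used: the identity $\mathbf P_h^{n+1}=\Pi^{(h)}_{\mathcal B_h}(R_h\mathbf P_h^n)$; the fact that $\mathbf P_h^n\in\mathcal B_h$ for $n\ge1$ (this is why $n\geq 1$ is required); and the bound $\|R_h\|_{h\to h}\le 1$ from \Cref{lem:transverlality}.

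\emph{First line, left inequality.} Write
\[
\mathbf P_h^{n+1}-\mathbf P_h^n=\bigl(\Pi^{(h)}_{\mathcal B_h}(R_h\mathbf P_h^n)-R_h\mathbf P_h^n\bigr)+\bigl(R_h\mathbf P_h^n-\mathbf P_h^n\bigr).
\]
Since $\mathbf P_h^n\in\mathcal B_h$ and $\Pi^{(h)}_{\mathcal B_h}$ is the nearest-point map in $\|\cdot\|_h$, the first bracket has norm at most $\|R_h\mathbf P_h^n-\mathbf P_h^n\|_h$. The triangle inequality then gives the factor $2$.

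\emph{First line, right inequality.} Insert $\mathbf P_h^\star$ and $\widehat{\mathbf P}_h$:
\[
\mathbf P_h^n-R_h\mathbf P_h^n=(\mathbf P_h^n-\mathbf P_h^\star)-R_h(\mathbf P_h^n-\mathbf P_h^\star)+(\mathbf P_h^\star-\widehat{\mathbf P}_h)-R_h(\mathbf P_h^\star-\widehat{\mathbf P}_h)+(\widehat{\mathbf P}_h-R_h\widehat{\mathbf P}_h).
\]
Using $\|R_h\|_{h\to h}\le1$, the first two terms contribute at most $2\|\mathbf P_h^n-\mathbf P_h^\star\|_h$, and the last term is exactly $\rho_h$.

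The middle terms contribute at most $2\|\mathbf P_h^\star-\widehat{\mathbf P}_h\|_h$. This must be bounded by $C\rho_h$, which is the one step needing care. \Cref{thm:local_fixed_point} states \eqref{eq:errP} only as $Ch^{1+\alpha}$. However, its proof gives the sharper bound
\[
\|\mathbf P_h^\star-\widehat{\mathbf P}_h\|_h\le \frac{\varepsilon_h}{1-q}\le\frac{C_\Pi}{1-q}\rho_h,
\]
by \eqref{eq:resestimate}. Quoting this yields the constant $C=1+2C_\Pi/(1-q)$. I expect this to be the main (minor) obstacle: one must use the intermediate bound through $\rho_h$, not the final $h^{1+\alpha}$ form of \eqref{eq:errP}.

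\emph{Second line.} This is the standard a posteriori contraction estimate. We have
\[
\|\mathbf P_h^n-\mathbf P_h^\star\|_h\le\|\mathbf P_h^n-\mathbf P_h^{n+1}\|_h+\|T_h\mathbf P_h^n-T_h\mathbf P_h^\star\|_h\le \|\mathbf P_h^{n+1}-\mathbf P_h^n\|_h+q\|\mathbf P_h^n-\mathbf P_h^\star\|_h .
\]
Absorbing the last term gives the factor $1/(1-q)$. The final inequality, with factor $2/(1-q)$, then follows immediately from the left inequality of the first line, which was already established.
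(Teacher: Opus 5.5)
Your proposal is correct and is essentially the paper's argument written out in full. The paper's one-line proof invokes the same three ingredients: the nearest-point bound $\|\mathbf P_h^{n+1}-\mathbf P_h^n\|_h\le 2\|R_h(\mathbf P_h^n)-\mathbf P_h^n\|_h$ (valid because $\mathbf P_h^n\in\mathcal B_h$ for $n\ge1$), linearity of $R_h$ with $\|R_h\|_{h\to h}\le 1$, and the contraction estimate. Your remark that the $C\rho_h$ term needs the intermediate bound $\|\mathbf P_h^\star-\widehat{\mathbf P}_h\|_h\le C_\Pi\rho_h/(1-q)$ from the proof of \Cref{thm:local_fixed_point}, rather than the stated form \eqref{eq:errP}, is the right way to make that step precise.
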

\begin{proof}
This is a consequence of the contraction estimate, of the linearity of $R_h$ and of the following bound: 
\begin{equation*}
    \|\mathbf{P}_h^{n+1} - \mathbf{P}_h^{n}\|_h \leq 2 \|R_h(\mathbf{P}_h^n)-\mathbf{P}_h^n\|_h. 
    \qedhere
\end{equation*}
\end{proof}

The radius \(O(h)\) in \Cref{thm:local_fixed_point} is a consequence of the same mechanism that appears in other local finite element convergence analyses for fully nonlinear elliptic equations \cite{feng_neilan_mixed,brenner,quadratic_neilan,unified,awanou}, as discussed in \Cref{secc:related_works}. The nonlinear constraint, and in particular the preservation of the elliptic branch, is a pointwise condition on the discrete Hessian. In the present argument the smoothness of the nodal projection \(\Pi^{(h)}_{B_h}\) is available in a neighbourhood measured in \(\|\cdot\|_{h,\infty}\), whereas the stability of the reconstruction step is obtained in the discrete \(L^2\)-type norm \(\|\cdot\|_h\). Consequently, the inverse estimate \eqref{eq:inverseest} in two space dimensions forces the admissible \(\|\cdot\|_h\)-radius to be of order \(h\). 

Although the contraction neighbourhood \(\mathcal U_{\kappa_0}^h(\widehat{\mathbf P}_h)\) has radius of order \(h\), this could be compatible with a nested mesh-refinement strategy. Let \(h'=h/2\), and assume that the refinement is nested. Let
\[
\mathcal I_h^{h'}:Z_h(\mathbb S^2)\to Z_{h'}(\mathbb S^2)
\]
be the natural prolongation operator, obtained by extending each coarse-element polynomial to the refined mesh, i.e., by using the same polynomial on every child element. Then, using \Cref{thm:local_fixed_point} and the standard interpolation estimate \cite{brenner_fem}, we obtain
\[
\begin{aligned}
    \|\mathcal I_h^{h'}\mathbf P_h^\star-\widehat{\mathbf P}_{h'}\|_{h'} \lesssim 
    \|\mathbf P_h^\star-\widehat{\mathbf P}_h\|_h
    +
    \|\mathcal I_h^{h'}\widehat{\mathbf P}_h-\widehat{\mathbf P}_{h'}\|_{h'} 
        \lesssim 
    h^\beta,
\end{aligned}
\]
where $\beta:=\min\{m+1,\alpha+1\}$. More generally, if \(\mathbf P_h^n\) is the \(n\)-th iterate on the coarse
mesh and the initial datum belongs to the coarse contraction neighbourhood,
then
\[
\begin{aligned}
    \|\mathcal I_h^{h'}\mathbf P_h^n-\widehat{\mathbf P}_{h'}\|_{h'}
    \leq
    \|\mathcal I_h^{h'}(\mathbf P_h^n-\mathbf P_h^\star)\|_{h'}
    +
    \|\mathcal I_h^{h'}\mathbf P_h^\star-\widehat{\mathbf P}_{h'}\|_{h'} \lesssim     q^n h + h^\beta .
\end{aligned}
\]
Therefore, if \(\beta>1\), then \(h^\beta=o(h)\). Hence, by choosing \(n\) large enough and then \(h\) sufficiently small, we obtain \(\mathcal I_h^{h'}\mathbf P_h^n\in\mathcal U_{\kappa_0}^{h'}(\widehat{\mathbf P}_{h'})\). Thus the prolongated coarse iterate can be used as an initial datum for the fixed-point iteration on the refined mesh.

\begin{remark}[Extension to smooth uniformly elliptic operators]\label{rem:extension}
The proof of \Cref{thm:smooth_projection} is not specific to the determinant constraint. It only requires that, near the exact Hessian, the discrete constraint set be a smooth manifold admitting a smooth nearest-point projection. Let \(F:\mathbb S^2\to\mathbb R\) be of class \(C^3\), and consider
\[
\begin{cases}
F(D^2u)=0 & \text{in }\Omega,\\
u=g & \text{on }\partial\Omega.
\end{cases}
\]
If \(F\) is uniformly elliptic in a neighbourhood of \(D^2u\), then \(F'(D^2u)\) is uniformly spd, see for instance \cite{caffarelli_cabre}. At each local node \(x_{K,j}\), the constraint \(F(M)=0\) defines a smooth hypersurface of \(\mathbb S^2\) near \(\widehat{\mathbf P}_h(x_{K,j})\). Consequently, for \(h\) sufficiently small, the nodal projection onto the corresponding discrete constraint set is locally well defined and smooth. Assuming moreover an a priori bound on \(\|F'(\widehat{\mathbf P}_h)\|_{W^{1,\infty}}\), \Cref{lemma:TMimpliesCordes} remains valid, and the remainder of the fixed-point argument is unchanged. Thus, the local convergence result in \Cref{thm:local_fixed_point} extends to smooth uniformly elliptic fully nonlinear operators.
\end{remark}

\section{Some examples of methods verifying~\texorpdfstring{\eqref{eq:MTstab}}{(MTstab)} and~\texorpdfstring{\eqref{eq:boundhR_h}}{(boundhR_h)}}
\label{sec:discreteMT}
The convergence result in \Cref{thm:local_fixed_point} was stated in terms of two abstract assumptions on the discrete reconstruction operator \(R_h\): the Miranda-Talenti-type stability estimate \eqref{eq:MTstab} and the consistency condition \eqref{eq:boundhR_h}. The purpose of this section is to verify these assumptions for several standard finite element discretizations of the linear biharmonic step.

We recall that for simplicity, we restrict the presentation to homogeneous Dirichlet boundary conditions. We consider three choices of the pair \((H_h,J_h)\): a conforming \(C^1\) finite element method, a \(C^0\) interior-penalty method, and a discontinuous Galerkin method. In each case, the stability estimate \eqref{eq:MTstab} follows either directly from the continuous Miranda-Talenti inequality or from known discrete Miranda-Talenti estimates. Once this estimate holds, the consistency condition \eqref{eq:boundhR_h} follows, provided that the polynomial degree \(m\) is chosen sufficiently large (specifically, $m\geq 1$). Recall that \(m\) denotes the polynomial degree of the discrete Hessian, while \(u_h\) is approximated by polynomials of degree \(k=m+2\). Hence, the analysis requires \(k\ge3\). Nevertheless, quadratic $C^0$ and DG discretizations are also covered provided that $|u|_{H^3}$ is sufficiently small; see \Cref{rem:valuealpha,rem:m0c0,rem:m0dg}. Similar minimum-degree assumptions appear, for example, in \cite{brenner,awanou}.

\subsection{Preliminary notation and results}\label{sec:preliminary_res}

Let \(\mathcal T_h\) be a shape-regular and quasi-uniform triangulation of a convex polygonal domain \(\Omega\), as introduced in \Cref{sec:not1}. We denote by
\(\mathcal F_h^i\), \(\mathcal F_h^b\), and \(\mathcal F_h\) the sets of interior, boundary, and all edges, respectively.

Let \(F\in\mathcal F_h^i\) be an interior edge shared by two elements \(K^+\) and \(K^-\), namely \(F=\partial K^+\cap\partial K^-\). We denote by \(\mathbf n_\pm\) the outward unit normal to \(K^\pm\), and for a piecewise defined function \(w\), we write \(w^\pm:=w|_{K^\pm}\). For a vector field \(\mathbf w\), we define the normal jump across \(F\in\mathcal{F}^i_h\) by
\begin{equation}
    \llbracket \mathbf w \rrbracket |_F
    :=
    \mathbf w^+\cdot \mathbf n_+
    +
    \mathbf w^-\cdot \mathbf n_- .
\end{equation}
For a scalar function \(w\), the jump is vector-valued and is defined by
\begin{align}
    \llbracket w \rrbracket |_F
    &:=
    w^+\mathbf n_+
    +
    w^-\mathbf n_-,
    && F\in\mathcal F_h^i, \\
    \llbracket w \rrbracket |_F
    &:=
    w\mathbf n,
    && F\in\mathcal F_h^b,
\end{align}
where, on a boundary edge, \(\mathbf n\) denotes the outward unit normal to \(\Omega\). We shall use the standard trace inequality \cite{brenner_fem}: for every \(K\in\mathcal T_h\), every edge \(F\subset\partial K\), and every \(v\in H^1(K)\),
\begin{equation}\label{eq:trace_inequality}
    \|v\|_{L^2(F)}
    \lesssim
    h_K^{-1/2}\|v\|_{L^2(K)}
    +
    h_K^{1/2}|v|_{H^1(K)} .
\end{equation}

Finally, let \(I_K\) denote the local interpolation operator onto \(\mathbb P_\ell(K)\). We recall the following standard interpolation estimate for  \(1<p\leq\infty\) \cite[Theorem 4.4.4]{brenner_fem}:
\begin{equation}\label{eq:interp_estimates}
    |v-I_Kv|_{W^{i,p}(K)}
    \leq
    C h_K^{s-i}|v|_{W^{s,p}(K)} \quad \forall\,v\in W^{s,p}(K),
\end{equation}
with \(0\leq i\leq s\leq \ell+1\), \(s>2/p\) and $C>0$ independent of $h$. In what follows, \(I_h^X\) and \(I_h^Z\) denote the corresponding elementwise interpolation operators into \(X_h\) and \(Z_h\), respectively.

\subsection{Conforming and non-conforming finite element methods}\label{sec:consistency}
Before specifying the choices of \(X_h\), \(H_h\), and \(J_h\) for each method, we isolate a consistency argument that is common to all the conforming and non-conforming discretizations considered below.

Recall that \(\widehat{\mathbf P}_h := I_h^Z(D^2u)\in Z_h(\mathbb S^2)\), where \(u\) is the exact solution of the Monge-Amp\`ere equation satisfying \eqref{eq:elliptic}. Let \(w_h := I_h^X u \in X_h\) be a suitable interpolant of \(u\), and define
\[
    e_h := u_h(\widehat{\mathbf P}_h)-w_h \in X_h .
\]
Using \eqref{eq:bihC0IP}, we obtain
\[
    \|H_he_h\|_h
    +
    \sigma^{1/2}J_h(e_h,e_h)^{1/2}
    \lesssim
        \|\widehat{\mathbf P}_h-H_hw_h\|_h
        +
        \sigma^{1/2}J_h(w_h,w_h)^{1/2}
    .
\]
Therefore,
\begin{align}
    \rho_h
    &:=
    \|R_h(\widehat{\mathbf P}_h)-\widehat{\mathbf P}_h\|_h
      =
    \|H_hu_h(\widehat{\mathbf P}_h)-\widehat{\mathbf P}_h\|_h
    \notag\\
    &\le
    \|H_he_h\|_h
    +
    \|H_hw_h-\widehat{\mathbf P}_h\|_h
    \notag\\
    &\lesssim
        \|H_hw_h-\widehat{\mathbf P}_h\|_h
        +
        \sigma^{1/2}J_h(w_h,w_h)^{1/2}.
    \label{eq:rho_abstract_consistency}
\end{align}

It remains to estimate the two terms on the right-hand side. Since both \(H_hw_h\) and \(\widehat{\mathbf P}_h\) belong to \(Z_h(\mathbb S^2)\), the equivalence between the nodal norm and the broken \(L^2\)-norm and triangle inequality yield
\[
    \|H_hw_h-\widehat{\mathbf P}_h\|_h
    \lesssim
    \|H_hw_h-\widehat{\mathbf P}_h\|_{L^2(\mathcal T_h)} \lesssim    
    \|H_hw_h-D^2u\|_{L^2(\mathcal T_h)}
    +
    \|D^2u-\widehat{\mathbf P}_h\|_{L^2(\Omega)} .
\]
Moreover, by the interpolation estimate \eqref{eq:interp_estimates},
\[
    \|D^2u-\widehat{\mathbf P}_h\|_{L^2(\Omega)}
    =
    \|D^2u-I_h^Z(D^2u)\|_{L^2(\Omega)}
    \le
    C h^{m+1}|u|_{H^{m+3}(\Omega)} .
\]
Thus, after choosing a discretization for which the Miranda-Talenti stability estimate \eqref{eq:MTstab} holds, in order to prove \eqref{eq:boundhR_h}, it remains to control
\[
    \|H_hw_h-D^2u\|_{L^2(\mathcal T_h)}
    \qquad\text{and}\qquad
    J_h(w_h,w_h)^{1/2}.
\]
We now consider specific choices of \(X_h\), \(H_h\), and \(J_h\) for which \eqref{eq:MTstab} is valid, and we estimate the two quantities above in each case, taking into account the corresponding treatment of the boundary condition.

\subsubsection{$C^1$ finite elements}\label{sssec:c1}
We first consider the conforming \(C^1\) case as a benchmark example. It is the simplest setting analytically, because no stabilization is needed and the continuous Miranda-Talenti inequality can be applied directly. From the computational point of view, however, this case is less attractive, since \(C^1\) finite elements are typically more complicated to implement than
\(C^0\) or discontinuous finite elements.

We impose the homogeneous Dirichlet boundary condition strongly and choose a conforming \(C^1\) finite element space \(X_h \subset H^2(\Omega)\cap H^1_0(\Omega)\) of polynomial degree \(k=m+2\), namely: 
\[
X_h
:=
\bigl\{v\in H^2\cap H^1_0(\Omega): v|_K\in \mathbb{P}_k(K)
\ \text{for all } K\in\mathcal T_h\bigr\}.
\]
We define $H_hv_h:=D^2v_h$ and $J_h\equiv0$. With this choice, \eqref{eq:MTstab} reduces exactly to the continuous Miranda-Talenti inequality:
\[
    \|D^2v_h\|_{L^2(\Omega)}
    \le
    \|\Delta v_h\|_{L^2(\Omega)}
    \qquad
    \forall v_h\in X_h.
\]
This inequality holds because \(X_h\subset H^2(\Omega)\cap H^1_0(\Omega)\) and
\(\Omega\) is convex.

It remains to verify the consistency condition \eqref{eq:boundhR_h}. Let \(I_h^Xu\in X_h\) be the corresponding \(C^1\) interpolant of the exact solution. Since \(J_h\equiv0\), the abstract estimate \eqref{eq:rho_abstract_consistency} only requires us to control the interpolation error of the Hessian. By
\eqref{eq:interp_estimates},
\[
    \|D^2I_h^Xu-D^2u\|_{L^2(\Omega)}
    \le
    C h^{m+1}|u|_{H^{m+3}(\Omega)}.
\]
Therefore,
\[
    \rho_h
    \le
    C h^{m+1}|u|_{H^{m+3}(\Omega)}.
\]
Hence \eqref{eq:boundhR_h} holds with \(\alpha=m\). For instance, the Argyris element has polynomial degree \(k=5\), corresponding to \(m=3\). In this case, both \eqref{eq:MTstab} and \eqref{eq:boundhR_h} are satisfied, and therefore \Cref{thm:local_fixed_point} applies and the following corollary holds. 

\begin{corollary}\label{cor:conv_c1}
    Let $X_h$ be the space of Argyris elements of degree $k=5$. Then, if $H_hv_h := D^2v_h$, $v_h\in X_h$ and $J_h\equiv 0$, \eqref{eq:bihC0IP} is well posed. Moreover, define $u_h^\star:= u_h(\mathbf{P}_h^\star)$ as the solution to \eqref{eq:bihC0IP}. Then, $u_h^\star$ is convex and 
    $$\|H_hu_h^\star - D^2u\|_{L^2(\mathcal{T}_h)}\lesssim\, h^{1+m}.$$
\end{corollary}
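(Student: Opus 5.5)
The plan has three parts: well-posedness of \eqref{eq:bihC0IP}, the error estimate, and convexity.

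\emph{Well-posedness.} Here $a_h(w_h,v_h)=(D^2w_h,D^2v_h)_h$ because $J_h\equiv0$. It suffices to show that $a_h$ is coercive on $X_h$.
\begin{itemize}
\item The nodal norm is equivalent to the broken $L^2$ norm on $Z_h$, and $D^2v_h\in Z_h(\mathbb S^2)$ since $k=m+2$. Hence $a_h(v_h,v_h)\simeq\|D^2v_h\|_{L^2(\Omega)}^2$.
\item On $H^2(\Omega)\cap H^1_0(\Omega)$, the quantity $\|D^2v\|_{L^2}$ is a norm equivalent to the $H^2$ norm, by Poincaré and the Miranda-Talenti inequality on the convex domain.
\end{itemize}
Thus $a_h$ is coercive on the finite-dimensional space $X_h$, and Lax-Milgram gives a unique $u_h(\mathbf P_h)$ for every $\mathbf P_h$.

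\emph{Error estimate.} Section \ref{sssec:c1} already verifies \eqref{eq:MTstab} and verifies \eqref{eq:boundhR_h} with $\alpha=m=3>0$. So \Cref{thm:local_fixed_point} applies and yields a fixed point $\mathbf P_h^\star$ with $\|\mathbf P_h^\star-\widehat{\mathbf P}_h\|_h\lesssim h^{1+\alpha}$. I would then split
\[
\|D^2u_h^\star-D^2u\|_{L^2}\le \|R_h\mathbf P_h^\star-R_h\widehat{\mathbf P}_h\|_{L^2}+\|R_h\widehat{\mathbf P}_h-\widehat{\mathbf P}_h\|_{L^2}+\|\widehat{\mathbf P}_h-D^2u\|_{L^2}.
\]
The three terms are handled as follows.
\begin{itemize}
\item The first term is bounded using norm equivalence, linearity and $\|R_h\|_{h\to h}\le1$ (\Cref{lem:transverlality}). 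This gives a bound by $\|\mathbf P_h^\star-\widehat{\mathbf P}_h\|_h\lesssim h^{1+\alpha}$.
\item The second term is $\rho_h\lesssim h^{m+1}$, as computed in the text.
\item The third term is $\lesssim h^{m+1}|u|_{H^{m+3}}$ by \eqref{eq:interp_estimates}.
\end{itemize}
Since $\alpha=m$, all three terms are $O(h^{1+m})$.

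\emph{Convexity.} This is the step needing care, because we need a pointwise statement about $D^2u_h^\star$. From the first two error terms we have $\|R_h\mathbf P_h^\star-\widehat{\mathbf P}_h\|_h\lesssim h^{1+m}$. The inverse estimate \eqref{eq:inverseest} then gives
\[
\|D^2u_h^\star-\widehat{\mathbf P}_h\|_{h,\infty}\lesssim h^{m}\to0.
\]
For small $h$ this quantity is below $\tau_0\le\nu_2/2$, so $D^2u_h^\star\in\mathcal U^\infty_{\tau}(\widehat{\mathbf P}_h)$. \Cref{thm:smooth_projection} then says $D^2u_h^\star$ is positive definite at every node and, for $h$ small, throughout each element.

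It remains to pass from elementwise positive definiteness to global convexity. Since $u_h^\star\in C^1(\overline\Omega)$, it is convex on each element, and its gradient is continuous across edges. The function is therefore convex along every segment: restricted to a segment it is a piecewise $C^2$ function with a continuous, piecewise nondecreasing derivative, hence a globally nondecreasing derivative. This gives convexity of $u_h^\star$ on the convex set $\Omega$. This gluing argument is where I expect the only real subtlety.
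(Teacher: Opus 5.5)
Your proposal is correct and follows essentially the same route as the paper. Well-posedness comes from coercivity of $a_h$ on $X_h\subset H^2(\Omega)\cap H^1_0(\Omega)$; the error bound uses the same three-term splitting, controlled by $\|R_h\|_{h\to h}\le 1$ together with \eqref{eq:errP}, the consistency bound on $\rho_h$, and interpolation; and convexity follows from \Cref{thm:smooth_projection,thm:local_fixed_point} plus the $C^1$ regularity of $u_h^\star$. The only difference is that you spell out what the paper states in one line, namely the inverse-estimate step placing $D^2u_h^\star$ in $\mathcal U^\infty_{\tau_0}(\widehat{\mathbf P}_h)$ and the segment-wise gluing of elementwise convexity via continuity of $\nabla u_h^\star$.
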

\begin{proof}
Since \(X_h\subset H^2(\Omega)\cap H^1_0(\Omega)\), \(H_hv_h=D^2v_h\), and \(J_h\equiv0\), the bilinear form \(a_h\) is coercive on \(X_h\). Hence \eqref{eq:bihC0IP} is well posed. Since $u_h^\star\in C^1$, its convexity is a consequence of \Cref{thm:smooth_projection} and \Cref{thm:local_fixed_point}. Also, \Cref{thm:local_fixed_point} yields
\begin{align*}
    \|H_hu_h^\star-D^2u\|_{L^2(\mathcal T_h)}
&\lesssim
\|R_h(\mathbf{P}_h^\star)-R_h(\widehat{\mathbf{P}}_h)\|_h
+\|R_h(\widehat{\mathbf{P}}_h)-\widehat{\mathbf{P}}_h\|_h
+\|\widehat{\mathbf{P}}_h-D^2u\|_{L^2(\Omega)}  \\
&\lesssim
\|\mathbf{P}_h^\star-\widehat{\mathbf{P}}_h\|_h+\rho_h+h^{m+1}
\lesssim h^{m+1}. \qedhere
\end{align*}
\end{proof}

\subsubsection{$C^0$ finite elements}\label{sssec:c0}
We now consider \(C^0\) Lagrange finite elements. We impose the homogeneous Dirichlet boundary condition strongly and choose
\(X_h\subset H^1_0(\Omega)\) to be the \(C^0\) finite element space of polynomial degree \(k=m+2\), namely: 
\[
X_h
:=
\bigl\{v\in H^1_0(\Omega): v|_K\in \mathbb{P}_k(K)
\ \text{for all } K\in\mathcal T_h\bigr\}.
\]
Since functions in \(X_h\) are not globally \(H^2\), we define the discrete Hessian elementwise:
\begin{equation}\label{eq:element_wise_hessian}
    (H_hv_h)|_K := D^2(v_h|_K),
    \qquad K\in\mathcal T_h .
\end{equation}

The continuous Miranda-Talenti inequality cannot be applied directly, because \(X_h\not\subset H^2(\Omega)\). We therefore add the jump stabilization
\begin{equation}\label{eq:stabilization}
    J_h(v_h,w_h)
    :=
    \sum_{F\in\mathcal F_h^i}
    h_F^{-1}
    \int_F
    \llbracket \nabla v_h\rrbracket
    \llbracket \nabla w_h\rrbracket
    \,ds .
\end{equation}
For convex polygonal domains, the discrete Miranda-Talenti estimates of \cite[Theorem 4.8]{neilansalgado} and \cite[Theorem 1]{neilan-wu} imply that \eqref{eq:MTstab} holds.

It remains to verify the consistency condition \eqref{eq:boundhR_h}. Let \(I_h^X\) be the standard \(C^0\) Lagrange interpolation operator of degree \(k=m+2\), and set \(w_h := I_h^Xu \). By the interpolation estimate \eqref{eq:interp_estimates},
\[
    \|H_hw_h-D^2u\|_{L^2(\mathcal T_h)}
    \le
    C h^{m+1}|u|_{H^{m+3}(\Omega)} .
\]
We now estimate the stabilization term. Set \(\eta_h := I_h^Xu-u \). Since \(u\) is smooth, its gradient has no jump across interior faces, namely \(\llbracket \nabla u\rrbracket=0\). Hence, for every interior face \(F=K^+\cap K^-\), \(\llbracket \nabla I_h^Xu\rrbracket=\llbracket \nabla \eta_h\rrbracket\). Therefore,
\[
    \|\llbracket \nabla I_h^Xu\rrbracket\|_{L^2(F)}^2
    \le
    C\left(
        \|\nabla \eta_h\|_{L^2(F\cap\partial K^+)}^2
        +
        \|\nabla \eta_h\|_{L^2(F\cap\partial K^-)}^2
    \right).
\]
Applying the trace inequality \eqref{eq:trace_inequality} on each neighbouring element and the interpolation estimates \eqref{eq:interp_estimates}, yields
\begin{equation}\label{eq:jumpss}
    h_F^{-1}
    \|\llbracket \nabla I_h^Xu\rrbracket\|_{L^2(F)}^2
    \le
    C h^{2m+2}
    \sum_{K\in\{K^+,K^-\}}
    |u|_{H^{m+3}(K)}^2 .
\end{equation}
Summing over all interior faces gives
\[
    J_h(w_h,w_h)^{1/2}
    \le
    C h^{m+1}|u|_{H^{m+3}(\Omega)} .
\]
Combining this estimate with \eqref{eq:rho_abstract_consistency}, we obtain \(\rho_h\le C h^{m+1}|u|_{H^{m+3}(\Omega)}\). Thus \eqref{eq:boundhR_h} holds with \(\alpha=m\). In particular, if \(m\geq 1\), then the assumptions of \Cref{thm:local_fixed_point} are satisfied, and the map \(T_h\) is a contraction in the local neighbourhood of \(\widehat{\mathbf P}_h\) specified there. Moreover, the following corollary holds. 
\begin{corollary}\label{cor:conv_c0}
    Let $X_h\subset H^1_0(\Omega)$ be the space of $C^0$ finite elements of degree $k\geq 3$ . Then, if $H_hv_h$ and $J_h$ are respectively defined by \eqref{eq:element_wise_hessian} and \eqref{eq:stabilization}, \eqref{eq:bihC0IP} is well posed. Moreover, define $u_h^\star:= u_h(\mathbf{P}_h^\star)$ as the solution to \eqref{eq:bihC0IP}. Then, $u_h^\star$ is elementwise convex and
    $$\|H_hu_h^\star - D^2u\|_{L^2(\mathcal{T}_h)} + \sigma^{1/2}J_h(u_h^\star,u_h^\star)^{1/2}\lesssim\, h^{1+m}.$$
\end{corollary}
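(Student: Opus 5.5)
The proof follows the pattern of \Cref{cor:conv_c1}, with the stabilization term carried along. It has three parts: well-posedness of \eqref{eq:bihC0IP}, the error estimate, and elementwise convexity.

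\emph{Well-posedness.} Since \(J_h\ge0\), \eqref{eq:bihC0IP} is well posed once \(a_h\) is coercive on \(X_h\). Let \(v_h\in X_h\) with \(a_h(v_h,v_h)=0\). Then \(H_hv_h=0\) at all nodes, so \(H_hv_h=0\) in \(Z_h\), and also \(J_h(v_h,v_h)=0\).
\begin{itemize}
\item From \(H_hv_h=0\), each \(v_h|_K\) is affine.
\item From \(J_h(v_h,v_h)=0\), the normal jumps of \(\nabla v_h\) vanish on all interior edges.
\item Since \(v_h\in C^0\), the tangential derivatives also match across edges, so \(\nabla v_h\) is globally continuous and piecewise constant, hence constant.
\item Therefore \(v_h\) is affine on \(\Omega\). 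The condition \(v_h|_{\partial\Omega}=0\) then forces \(v_h=0\).
\end{itemize}
Thus \(a_h\) is a norm on the finite-dimensional space \(X_h\), and Lax--Milgram applies. Alternatively, well-posedness follows from \eqref{eq:MTstab} combined with a discrete Poincaré-type argument, but the kernel argument is simpler.

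\emph{Error estimate.} Since \(k\ge3\), we have \(m\ge1\). The paragraph preceding the statement gives \eqref{eq:MTstab} and \eqref{eq:boundhR_h} with \(\alpha=m\ge1>0\), so \Cref{thm:local_fixed_point} applies and yields \(\mathbf P_h^\star\) with \(\|\mathbf P_h^\star-\widehat{\mathbf P}_h\|_h\lesssim h^{1+m}\). Set \(e:=u_h(\mathbf P_h^\star)-u_h(\widehat{\mathbf P}_h)\). By linearity, \(e\) solves \eqref{eq:bihC0IP} with datum \(\mathbf P_h^\star-\widehat{\mathbf P}_h\). Testing with \(e\) gives
\[
\|H_he\|_h^2+\sigma J_h(e,e)\le\|\mathbf P_h^\star-\widehat{\mathbf P}_h\|_h\|H_he\|_h ,
\]
so both \(\|H_he\|_h\) and \(\sigma^{1/2}J_h(e,e)^{1/2}\) are bounded by \(\|\mathbf P_h^\star-\widehat{\mathbf P}_h\|_h\).

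For the Hessian term we split
\[
H_hu_h^\star-D^2u=H_he+\bigl(R_h\widehat{\mathbf P}_h-\widehat{\mathbf P}_h\bigr)+\bigl(\widehat{\mathbf P}_h-D^2u\bigr).
\]
We use norm equivalence on \(Z_h\) and \(\rho_h\lesssim h^{m+1}\), together with the interpolation bound \(\|\widehat{\mathbf P}_h-D^2u\|_{L^2}\lesssim h^{m+1}\).

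For the stabilization term we use the triangle inequality for the seminorm \(J_h^{1/2}\):
\[
J_h(u_h^\star)^{1/2}\le J_h(e)^{1/2}+J_h\bigl(u_h(\widehat{\mathbf P}_h)\bigr)^{1/2}.
\]
The last term is controlled through \(e_h=u_h(\widehat{\mathbf P}_h)-I_h^Xu\). The energy estimate of \Cref{sec:consistency} bounds \(\sigma^{1/2}J_h(e_h)^{1/2}\), and \eqref{eq:jumpss} bounds \(J_h(I_h^Xu)^{1/2}\lesssim h^{m+1}\).

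\emph{Convexity.} Elementwise convexity follows from \Cref{thm:smooth_projection}. We must place \(R_h\mathbf P_h^\star=H_hu_h^\star\) in \(\mathcal U^\infty_\tau(\widehat{\mathbf P}_h)\). Estimate \eqref{eq:estim_res}, obtained from the inverse estimate with \(\|\mathbf P_h^\star-\widehat{\mathbf P}_h\|_h\lesssim h^{1+m}\), gives the stronger bound \(\|H_hu_h^\star-\widehat{\mathbf P}_h\|_{h,\infty}\lesssim h^{m}\), which is below \(\tau\) for small \(h\). By \Cref{thm:smooth_projection}, \(D^2(u_h^\star|_K)\) is then positive definite throughout each \(K\), so \(u_h^\star\) is convex on each element.

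The main obstacle is conceptual rather than computational. This is only \emph{elementwise} convexity: global convexity would require sign conditions on the gradient jumps, which is exactly why the statement is phrased elementwise.
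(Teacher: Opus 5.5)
Your proposal is correct and follows the paper's proof in all three parts. You use the same kernel argument for coercivity of \(a_h\), the same route to elementwise convexity via \Cref{thm:smooth_projection} and \Cref{thm:local_fixed_point}, and the same energy estimates combined with interpolation bounds and \eqref{eq:jumpss}. The only minor difference is in the error estimate. You route it through \(u_h(\widehat{\mathbf P}_h)\), using linearity of \(u_h(\cdot)\), the consistency bound on \(\rho_h\) and the estimate of \Cref{sec:consistency}, as in the proof of \Cref{cor:conv_c1}. The paper instead compares \(u_h^\star\) directly with \(I_h^Xu\) in a single energy estimate with datum \(\mathbf P_h^\star-H_hI_h^Xu\). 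Both give the same \(h^{m+1}\) bound.
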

\begin{proof}
The well-posedness of \eqref{eq:bihC0IP} follows from the coercivity of \(a_h\): if \(a_h(v_h,v_h)=0\), then \(v_h\) is elementwise affine, its
gradient has no interior jump, and \(v_h=0\) on \(\partial\Omega\), hence \(v_h=0\). Elementwise convexity is a consequence of \Cref{thm:smooth_projection} and \Cref{thm:local_fixed_point}. Now, let \(w_h:=I_h^X u\) and set \(\epsilon_h:=u_h^\star-w_h\). Subtracting the equation satisfied by \(w_h\) from \eqref{eq:bihC0IP} with \(\mathbf{P}_h=\mathbf{P}_h^\star\), and testing with \(\epsilon_h\), gives 
\[ \|H_h\epsilon_h\|_h+\sigma^{1/2}J_h(\epsilon_h,\epsilon_h)^{1/2} \lesssim \|\mathbf{P}_h^\star-H_hw_h\|_h + \sigma^{1/2}J_h(w_h,w_h)^{1/2}.\]
Using the interpolation estimates and the bound on \(\mathbf{P}_h^\star-\widehat{\mathbf{P}}_h\), we obtain
\[
\|\mathbf{P}_h^\star-H_hw_h\|_h
\le
\|\mathbf{P}_h^\star-\widehat{\mathbf{P}}_h\|_h
+\|\widehat{\mathbf{P}}_h-D^2u\|_{L^2(\Omega)}
+\|D^2u-H_hw_h\|_{L^2(\mathcal T_h)}
\lesssim h^{m+1}.
\]
Furthermore, as shown above, \( J_h(w_h,w_h)^{1/2}\lesssim h^{m+1}\). Therefore
\[
\|H_h\epsilon_h\|_h+\sigma^{1/2}J_h(\epsilon_h,\epsilon_h)^{1/2}
\lesssim h^{m+1}.
\]
Finally,
\[
\|H_hu_h^\star-D^2u\|_{L^2(\mathcal T_h)}
\le
\|H_h\epsilon_h\|_{L^2(\mathcal T_h)}
+
\|H_hw_h-D^2u\|_{L^2(\mathcal T_h)}
\lesssim h^{m+1},
\]
and, by the triangle inequality for the seminorm induced by \(J_h\),
\[
J_h(u_h^\star,u_h^\star)^{1/2}
\le
J_h(\epsilon_h,\epsilon_h)^{1/2}+J_h(w_h,w_h)^{1/2}
\lesssim h^{m+1}.  \qedhere
\]
\end{proof}
\begin{remark}\label{rem:m0c0}
    If $m=0$ and $|u|_{H^3(\Omega)}$ is sufficiently small, \Cref{thm:local_fixed_point} holds, as pointed out in \Cref{rem:valuealpha}.
\end{remark}
\subsubsection{DG finite elements}\label{sssec:dg}
We finally consider a discontinuous Galerkin discretization. Specifically, let $X_h\subset L^2(\Omega)$ be the space of discontinuous finite elements of degree $k= m+2$, namely: 
\[
X_h
:=
\bigl\{v\in L^2(\Omega): v|_K\in \mathbb{P}_k(K)
\ \text{for all } K\in\mathcal T_h\bigr\}.
\]
In this case, no boundary condition is built into the discrete space. The homogeneous Dirichlet condition is instead imposed weakly through the boundary terms in the stabilization. For \(v_h\in X_h\), we define the discrete Hessian elementwise, as in \eqref{eq:element_wise_hessian}. Since \(X_h\not\subset H^2(\Omega)\), we add the stabilization
\begin{equation}\label{eq:stab_dg}
    J_h(v_h,w_h)
    :=
    \sum_{F\in\mathcal F_h^i}
    h_F^{-1}
    \int_F
    \llbracket \nabla v_h\rrbracket
    \llbracket \nabla w_h\rrbracket
    \,ds
    +
    \sum_{F\in\mathcal F_h}
    h_F^{-3}
    \int_F
    \llbracket v_h\rrbracket\cdot
    \llbracket w_h\rrbracket
    \,ds .
\end{equation}
Here the second sum is taken over all faces, including boundary faces; on boundary faces, the jump convention is precisely the one used to impose the homogeneous Dirichlet condition weakly. With this choice, the discrete Miranda-Talenti estimate of \cite[Theorem 6.1, Corollary 6.4]{kaweckismears} implies that \eqref{eq:MTstab} holds. Although their estimate involves the full gradient jump, its tangential component is controlled by the value-jump term through a standard inverse estimate; alternatively, one may define \(J_h\) exactly as in \cite{kaweckismears}, without affecting the subsequent analysis.

It remains to verify the consistency condition \eqref{eq:boundhR_h}. With respect to the $C^0$ term, we are left to estimate only the term of $J_h$ involving the jumps of the function. Let \(I_h^X\) be the elementwise interpolation operator of degree \(m+2\), and set \(w_h := I_h^Xu\) and \(\eta_h := w_h-u\). Similarly, we have \(\llbracket w_h\rrbracket=\llbracket \eta_h\rrbracket\) on $\mathcal{F}_h$. Applying the trace inequality \eqref{eq:trace_inequality} on each element adjacent to \(F\), and then using the interpolation estimates for \(\eta_h\), gives
\[
    \sum_{F\in\mathcal F_h}
    h_F^{-3}
    \|\llbracket w_h\rrbracket\|_{L^2(F)}^2
    \le
    C h^{2m+2}|u|_{H^{m+3}(\Omega)}^2 .
\]
Therefore, we obtain
\(\rho_h\le C h^{m+1}|u|_{H^{m+3}(\Omega)}\) and \eqref{eq:boundhR_h} holds with \(\alpha=m\geq 1\), as for the previous cases.

\begin{corollary}\label{cor:convdg}
    Let $X_h\subset L^2(\Omega)$ be the space of discontinuous finite elements of degree $k\geq 3$ . Then, if $H_hv_h$ and $J_h$ are respectively defined by \eqref{eq:element_wise_hessian} and \eqref{eq:stab_dg}, \eqref{eq:bihC0IP} is well posed. Moreover, define $u_h^\star:= u_h(\mathbf{P}_h^\star)$ as the solution to \eqref{eq:bihC0IP}. Then, $u_h^\star$ is elementwise convex and 
    $$\|H_hu_h^\star - D^2u\|_{L^2(\mathcal{T}_h)} + \sigma^{1/2}J_h(u_h^\star - u,u_h^\star - u)^{1/2}\lesssim\, h^{1+m}.$$
\end{corollary}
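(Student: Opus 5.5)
The proof will follow the pattern of \Cref{cor:conv_c0}, with three parts: well-posedness, elementwise convexity, and the error estimate.

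\emph{Well-posedness.} We show that $a_h$ is coercive on $X_h$. Suppose $a_h(v_h,v_h)=0$ with $v_h\in X_h$. Then $H_hv_h=0$ at every local node. Since $H_hv_h\in Z_h(\mathbb S^2)$ and the nodes are unisolvent, $D^2(v_h|_K)=0$ on each $K$, so $v_h$ is elementwise affine. Moreover $J_h(v_h,v_h)=0$, so $\llbracket v_h\rrbracket=0$ on all faces, including boundary faces, and $\llbracket\nabla v_h\rrbracket=0$ on interior faces. Hence $v_h$ is globally continuous and piecewise affine with continuous gradient, so it is globally affine. It also vanishes on $\partial\Omega$, so $v_h=0$. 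Thus $a_h$ is a symmetric coercive form on a finite-dimensional space, and \eqref{eq:bihC0IP} is uniquely solvable.

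\emph{Elementwise convexity.} By \Cref{thm:local_fixed_point}, $\|\mathbf P_h^\star-\widehat{\mathbf P}_h\|_h\le\kappa_0h$. As in \eqref{eq:estim_res}, this gives $H_hu_h^\star=R_h\mathbf P_h^\star\in\mathcal U_\tau^\infty(\widehat{\mathbf P}_h)$ for $h$ and $\kappa_0$ small. \Cref{thm:smooth_projection} then gives that $H_hu_h^\star=D^2(u_h^\star|_K)$ is positive definite throughout each element, i.e.\ $u_h^\star$ is elementwise convex.

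\emph{Error estimate.} Set $w_h:=I_h^Xu$ and $\epsilon_h:=u_h^\star-w_h$. Subtract $a_h(w_h,v_h)$ from both sides of \eqref{eq:bihC0IP} with $\mathbf P_h=\mathbf P_h^\star$, and test with $v_h=\epsilon_h$. Cauchy--Schwarz then yields
\[
\|H_h\epsilon_h\|_h+\sigma^{1/2}J_h(\epsilon_h,\epsilon_h)^{1/2}\lesssim\|\mathbf P_h^\star-H_hw_h\|_h+\sigma^{1/2}J_h(w_h,w_h)^{1/2}.
\]
The first term on the right is split as $\mathbf P_h^\star-\widehat{\mathbf P}_h$, $\widehat{\mathbf P}_h-D^2u$ and $D^2u-H_hw_h$. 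The first piece is $O(h^{m+1})$ by \eqref{eq:errP} with $\alpha=m$. The other two pieces are $O(h^{m+1})$ by \eqref{eq:interp_estimates}. For the second term, the gradient-jump part of $J_h(w_h,w_h)$ is bounded exactly as in \eqref{eq:jumpss}. The value-jump part was bounded above by $h^{2m+2}|u|_{H^{m+3}}^2$.

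\emph{Combining.} By the triangle inequality, $\|H_hu_h^\star-D^2u\|_{L^2(\mathcal T_h)}\lesssim h^{m+1}$. For the stabilization term, $J_h(u,\cdot)$ is well defined: $u$ is smooth and vanishes on $\partial\Omega$, so all of its jumps vanish, including the boundary jumps. Hence
\[
J_h(u_h^\star-u,u_h^\star-u)^{1/2}=J_h(u_h^\star,u_h^\star)^{1/2}\le J_h(\epsilon_h,\epsilon_h)^{1/2}+J_h(w_h,w_h)^{1/2}\lesssim h^{m+1}.
\]

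The only mildly delicate point is the coercivity argument. It requires the boundary value-jump term to force the affine function to vanish, and uses the fact that $\mathbb P_1\subset\mathbb P_k$, so that $H_hv_h=0$ forces elementwise affine functions. Everything else is identical to the $C^0$ case.
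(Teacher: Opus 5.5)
Your proposal is correct and follows the same route as the paper, whose proof of this corollary simply refers back to the $C^0$ argument of \Cref{cor:conv_c0}. You also correctly supply the DG-specific details: the boundary value-jump term forces $v_h=0$ in the coercivity argument, and $J_h(u_h^\star-u,\cdot)=J_h(u_h^\star,\cdot)$ because all jumps of $u$ vanish. One small point to make explicit is that $\llbracket\nabla v_h\rrbracket$ in \eqref{eq:stab_dg} is only the normal-derivative jump, so continuity of the full gradient also uses value continuity for the tangential part.
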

\begin{proof}
    Similar to the proof of \Cref{cor:conv_c0}.
\end{proof}
\begin{remark}\label{rem:m0dg}
    As for the $C^0$ setting, if $m=0$ and $|u|_{H^3(\Omega)}$ is sufficiently small, \Cref{thm:local_fixed_point} holds, as pointed out in \Cref{rem:valuealpha}.
\end{remark}

\subsubsection{Other constructions}

The framework above does not require \(H_h\) to be the elementwise Hessian. Other reconstruction operators can also be used, provided they satisfy the two abstract assumptions \eqref{eq:MTstab} and \eqref{eq:boundhR_h}. For instance, in \cite{zhang_Rec}, the authors introduce a recovery operator satisfying a Miranda-Talenti type estimate. To fit such a construction into the present framework, one would still need to verify the consistency condition \eqref{eq:boundhR_h}. Another related approach is \cite{gallistl_MT}, where a Miranda-Talenti inequality is proved for \(C^0\) finite elements without the stabilization term.

\section{Numerical experiments}\label{sec:numres}
\subsection{Setup}
We include some numerical experiments to illustrate the theoretical results, additional analysis is included in Appendix \ref{app:numres}. The linear problem \eqref{eq:bihC0IP} is solved using \texttt{fenicsx} \cite{BarattaEtal2023}, while the nonlinear projection \eqref{eq:firstmin_disc} is computed pointwise with the algorithm \texttt{Qmin} from \cite{glowinski}. The local \texttt{Qmin} problems are solved to a tolerance much smaller than the discretization error, making the projection error negligible at the reported scales. We consider both the \(C^0\) interior-penalty and DG methods with \(m=0,1,2\), and choose the scalar finite element degree as \(k=m+2\) (i.e., \(k=2,3,4\), respectively). The convergence theory applies directly to \(m=1,2\), while the case \(m=0\) requires the additional smallness condition from \Cref{rem:valuealpha}; we include it to illustrate the behaviour of the lowest-order discretization.

We compare two choices of local points \(x_{K,j}\) used to define the nodal norm \(\|\cdot\|_h\). The first one is the natural nodal set associated with the degrees of freedom: the barycenter for \(m=0\), the vertices for \(m=1\), and the vertices together with the edge midpoints for \(m=2\). The second one uses quadrature points exact for polynomials of degree \(2m\). Since products of functions in \(Z_h\) have degree at most \(2m\) on each element, this second choice makes the nodal scalar product coincide with the broken \(L^2\)-inner product on \(Z_h\). For \(m=1\), the \(2m\)-exact choice is the standard three-point symmetric Gauss rule, while for \(m=2\), we use the six-point Dunavant rule of degree \(4\) \cite{dunavant}. The different choices are illustrated in \Cref{fig:nodal_choices}.

Unless otherwise stated, we set \(\sigma=10\). Other positive values of \(\sigma\) give comparable $h$ convergence rates, although the iterative convergence rate depends on \(\sigma\); see Appendix \Cref{app:numres}. The nonhomogeneous Dirichlet condition is imposed strongly for the \(C^0\) interior-penalty discretization and weakly for the DG discretization. In the latter case, the boundary penalty term involves \(u_h-g\), leaving the bilinear form unchanged and adding the contribution
\[
\sigma\sum_{F\in\mathcal F_h^b}
h_F^{-3}\int_F g\,v_h\,ds
\]
to the right-hand side of the linear reconstruction problem.

To initialize the algorithm, we follow \cite{caboussat} and compute \(u_h^0\in X_h\) as the finite element approximation of
\begin{equation}\label{eq:init}
    \Delta u^0 = 2\sqrt f
    \quad\text{in }\Omega,
    \qquad
    u^0=g
    \quad\text{on }\partial\Omega .
\end{equation}
This provides a good initial guess when the eigenvalues of \(D^2u\) are close. We then set \(\mathbf P_h^0:=\Pi^{(h)}_{\mathcal B_h}(H_hu_h^0)\). We also use a nested mesh-refinement continuation strategy, initializing each refinement level with the solution from the previous one, as discussed in \Cref{sec:conv}. The mesh-refinement is uniform. Finally, motivated by \Cref{thm:local_fixed_point,cor:res}, we stop the iteration when
\[
\frac{\|\mathbf{P}^{n+1}_h-\mathbf{P}^{n}_h\|_h}
{\|\mathbf{P}^{n}_h\|_h}
\leq \delta h^{1+m},
\qquad
\delta=10^{-3}.
\]
Smaller values of \(\delta\) do not affect the observed convergence rates.

\begin{figure}[t]
\centering
\begin{tikzpicture}[
    scale=1.65,
    tri/.style={line width=0.45pt},
    nodept/.style={circle,fill=black,inner sep=1.35pt},
    quadpt/.style={circle,draw=black,fill=white,line width=0.45pt,inner sep=1.35pt},
    lab/.style={font=\scriptsize,align=center}
]

\begin{scope}[shift={(0,0)}]
\draw[tri] (0,0)--(1,0)--(0,1)--cycle;
\node[nodept] at (0.333333,0.333333) {};
\node[lab] at (0.5,-0.22) {\(m=0\)\\ barycenter};
\end{scope}

\begin{scope}[shift={(1.45,0)}]
\draw[tri] (0,0)--(1,0)--(0,1)--cycle;
\node[nodept] at (0,0) {};
\node[nodept] at (1,0) {};
\node[nodept] at (0,1) {};
\node[lab] at (0.5,-0.22) {\(m=1\)\\ \(\mathbb{P}_1\) nodes};
\end{scope}

\begin{scope}[shift={(2.90,0)}]
\draw[tri] (0,0)--(1,0)--(0,1)--cycle;
\node[quadpt] at (0.166667,0.166667) {};
\node[quadpt] at (0.666667,0.166667) {};
\node[quadpt] at (0.166667,0.666667) {};
\node[lab] at (0.5,-0.3) {\(m=1\)\\ 3-point \\ Gauss rule};
\end{scope}

\begin{scope}[shift={(4.35,0)}]
\draw[tri] (0,0)--(1,0)--(0,1)--cycle;
\node[nodept] at (0,0) {};
\node[nodept] at (1,0) {};
\node[nodept] at (0,1) {};
\node[nodept] at (0.5,0) {};
\node[nodept] at (0.5,0.5) {};
\node[nodept] at (0,0.5) {};
\node[lab] at (0.5,-0.22) {\(m=2\)\\ \(\mathbb{P}_2\) nodes};
\end{scope}

\begin{scope}[shift={(5.80,0)}]
\draw[tri] (0,0)--(1,0)--(0,1)--cycle;

\def\a{0.445948490915965}
\def\b{0.108103018168070}
\def\c{0.091576213509771}
\def\d{0.816847572980459}

\node[quadpt] at (\a,\b) {};
\node[quadpt] at (\b,\a) {};
\node[quadpt] at (\a,\a) {};
\node[quadpt] at (\c,\d) {};
\node[quadpt] at (\d,\c) {};
\node[quadpt] at (\c,\c) {};

\node[lab] at (0.5,-0.3) {\(m=2\)\\ 6-point \\ Dunavant rule};
\end{scope}

\node[nodept] at (1.0,-0.78) {};
\node[lab,anchor=west] at (1.12,-0.78) {\(m\)-exact quadrature points};

\node[quadpt] at (4.10,-0.78) {};
\node[lab,anchor=west] at (4.22,-0.78) {\(2m\)-exact quadrature points};

\end{tikzpicture}
\caption{Local points used to define the nodal norm on the reference triangle.}
\label{fig:nodal_choices}
\end{figure}

\begin{figure}[t]
    \centering
    \begin{subfigure}{0.99\linewidth}
    \centering
    \includegraphics[width = \linewidth]{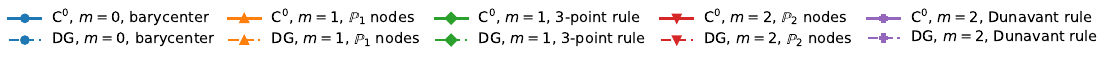}
\end{subfigure}
\vspace{0.1em}

\makebox[\linewidth]{\textbf{Test case 1}}

\vspace{0.3em}
\begin{subfigure}{0.24\linewidth}
    \includegraphics[width=0.99\linewidth]{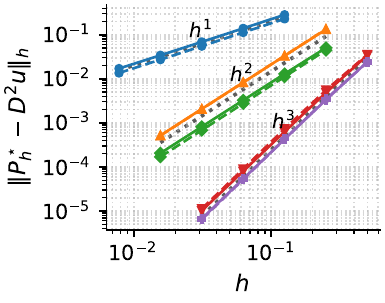}
\end{subfigure}
\begin{subfigure}{0.24\linewidth}
    \includegraphics[width=0.99\linewidth]{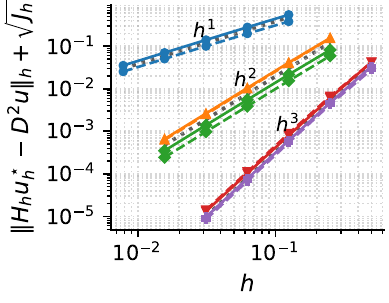}
\end{subfigure}
\begin{subfigure}{0.24\linewidth}
    \includegraphics[width=0.99\linewidth]{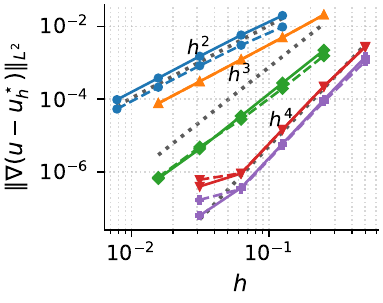}
\end{subfigure}
\begin{subfigure}{0.24\linewidth}
    \includegraphics[width=0.99\linewidth]{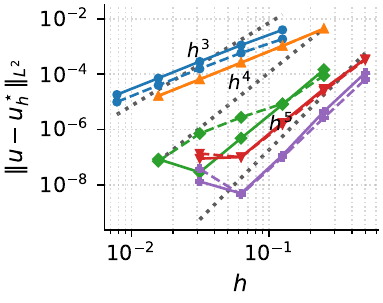}
\end{subfigure}
\vspace{0.01em}

\makebox[\linewidth]{\textbf{Test case 2}}

\vspace{0.3em}

\begin{subfigure}{0.24\linewidth}
    \includegraphics[width=0.99\linewidth]{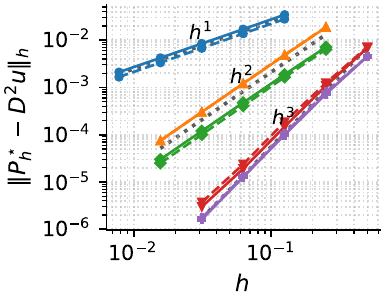}
\end{subfigure}
\begin{subfigure}{0.24\linewidth}
    \includegraphics[width=0.99\linewidth]{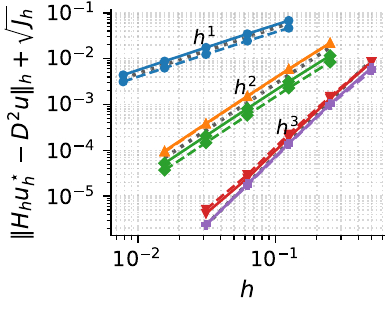}
\end{subfigure}
\begin{subfigure}{0.24\linewidth}
    \includegraphics[width=0.99\linewidth]{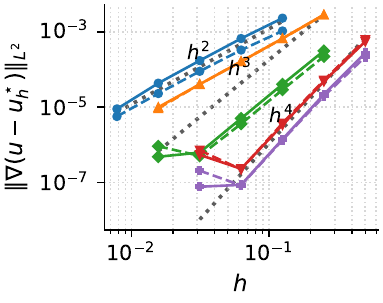}
\end{subfigure}
\begin{subfigure}{0.24\linewidth}
    \includegraphics[width=0.99\linewidth]{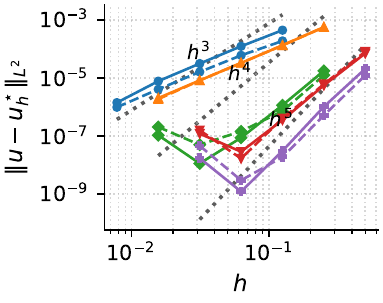}
\end{subfigure}
\caption{Errors vs. mesh size $h$ for test cases 1 (top row) and 2 (bottom row). From left to right: the error $\|\mathbf{P}_h^\star-D^2u\|_h$, the error $\|H_hu_h^\star-D^2u\|_h + J_h(u_h^\star-u,u_h^\star-u)^{1/2}$, the error $\|\nabla(u-u_h^\star)\|_{L^2}$ and the error $\|u-u_h^\star\|_{L^2}$.}
\label{fig:hconv}
\end{figure}

\begin{figure}[t]
    \centering
    \begin{subfigure}{0.99\linewidth}
    \centering
    \includegraphics[width = 0.55\linewidth]{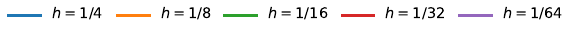}
\end{subfigure}
\vspace{0.1em}

\makebox[\linewidth]{\textbf{Initialization by continuation}}

\vspace{0.1em}
\begin{subfigure}{0.49\linewidth}
    \centering
    \includegraphics[width=0.48\linewidth]{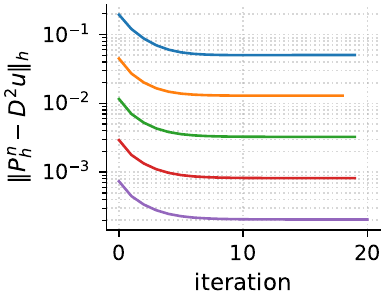}
    \includegraphics[width=0.48\linewidth]{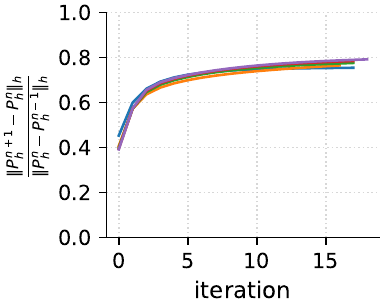}
    \caption*{\centering $C^0$, $m=1$, three-point Gauss rule}
\end{subfigure}
\begin{subfigure}{0.49\linewidth}
    \centering
    \includegraphics[width=0.48\linewidth]{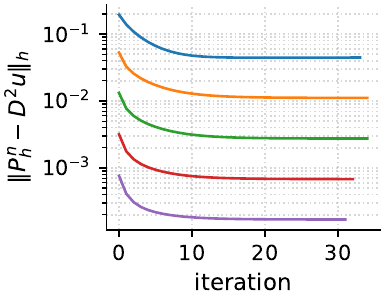}
    \includegraphics[width=0.48\linewidth]{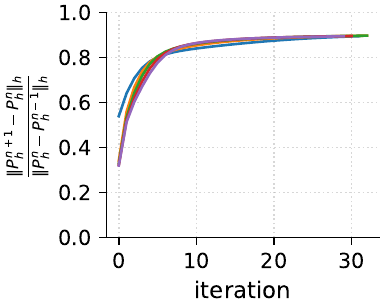}
    \caption*{\centering $DG$, $m=1$, three-point Gauss rule}
\end{subfigure}
\vspace{0.01em}
\makebox[\linewidth]{\textbf{Initialization by Poisson}}

\vspace{0.1em}

\begin{subfigure}{0.49\linewidth}
    \centering
    \includegraphics[width=0.48\linewidth]{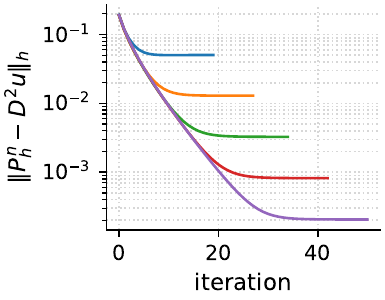}
    \includegraphics[width=0.48\linewidth]{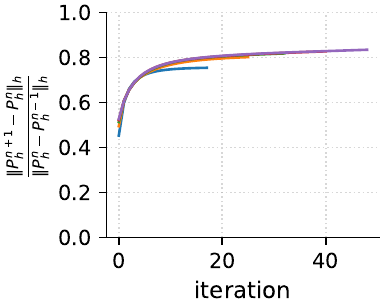}
    \caption*{\centering $C^0$, $m=1$, three-point Gauss rule}
\end{subfigure}
\begin{subfigure}{0.49\linewidth}
    \centering
    \includegraphics[width=0.48\linewidth]{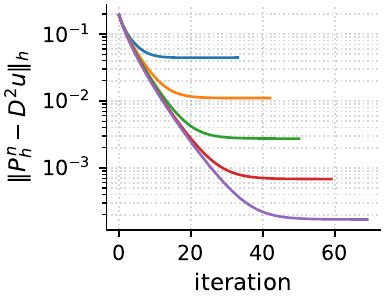}
    \includegraphics[width=0.48\linewidth]{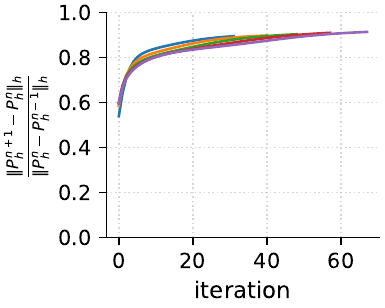}
    \caption*{\centering $DG$, $m=1$, three-point Gauss rule}
\end{subfigure}
\caption{Error and approximated contraction factor vs. iteration $n$ for two initialization strategies: continuation (top row) and Poisson (bottom row). From left to right: the error $\|\mathbf{P}_h^n-D^2u\|_h$ and the ratio  $\|\mathbf{P}_h^{n+1}-\mathbf{P}_h^n\|_h/\|\mathbf{P}_h^{n}-\mathbf{P}_h^{n-1}\|_h$ for $C^0$ finite elements with $m=1$ and the three-point Gauss rule and for DG finite elements with $m=1$ and the three-point Gauss rule.}
\label{fig:nconv_cont}
\end{figure}

\subsection{Test cases}
We consider two manufactured solutions on \(\Omega=[0,1]^2\):
\[
u_1(x,y)=\exp\!\left(\frac{x^2+y^2}{2}\right),
\qquad
u_2(x,y)=-\sqrt{4-x^2-y^2}.
\]
In both cases \(f=\det D^2u_i\) and \(g=u_i|_{\partial\Omega}\). \Cref{fig:hconv} reports the errors as functions of the mesh size $h$ for both test cases. For both the $C^0$ and DG methods, the errors $\|\mathbf{P}_h^\star-D^2u\|_h$ and $\|H_hu_h^\star-D^2u\|_h + J_h(u_h^\star - u,u_h^\star - u)^{1/2}$ converge with the expected rate $h^{m+1}$, including the case $m=0$. For \(m=1,2\), these observations agree with \Cref{thm:local_fixed_point} and \Cref{cor:conv_c0,cor:convdg}. For \(m=0\), the same rate is observed numerically, although the smallness condition required in \Cref{rem:m0c0,rem:m0dg} has not been checked for the present test cases.

The convergence of the $H^1$ and $L^2$-errors, \(\|\nabla(u-u_h^\star)\|_{L^2(\mathcal T_h)}\) and \(\|u-u_h^\star\|_{L^2(\mathcal T_h)}\) is more sensitive to the choice of the nodal points defining the discrete norm $\|\cdot\|_h$. For $m=1$, the natural choice of nodal points, corresponding to a quadrature rule that is exact on $\mathbb P_1$, does not yield the optimal rates for elliptic problems. In contrast, using quadrature points that are exact on $\mathbb P_2$ significantly improves the results. In the $C^0$ case, this recovers the optimal convergence rates $h^{m+2}$ and $h^{m+3}$ for the $H^1$- and $L^2$-errors, respectively. For the DG method, the optimal $H^1$-rate is also observed, whereas the $L^2$-error converges more slowly than expected. This behavior may be explained by the poor conditioning of the corresponding linear systems; see Appendix \ref{app:condition_numbers}. For $m=2$, both choices of nodal points recover the expected $H^1$-convergence rate $h^{m+2}=h^4$, although a slight deterioration is visible on the finest meshes, again consistent with the large condition numbers reported in Appendix \ref{app:condition_numbers}. The optimal $L^2$-rate $h^{m+3}=h^5$ is obtained only when the Dunavant quadrature rule is used, and deteriorates on the finest meshes, where the corresponding linear systems are also increasingly ill-conditioned. Finally, for the lowest-order case $m=0$, both the $H^1$ and $L^2$-errors converge at a rate close to $h^2$. While this is the optimal rate for the $H^1$-error, it is suboptimal for the $L^2$-error, whose expected rate would be $h^3$.

\Cref{fig:nconv_cont} reports the error $\|\mathbf{P}_h^n-D^2u\|_h$ and the approximated contraction factor $\|\mathbf{P}_h^{n+1}-\mathbf{P}_h^n\|_h/\|\mathbf{P}_h^{n}-\mathbf{P}_h^{n-1}\|_h$ as functions of the iteration of the splitting algorithm. It also compares the two initialization strategies: the continuation strategy and the initialization based on the Poisson problem \eqref{eq:init}. We present only the first test case with $m=1$ and the three-point Gauss rule, since the second test case and the other parameter choices exhibit similar behavior. For both initialization strategies, the error decreases monotonically with the iteration count. Moreover, after an initial transient, the ratios $\|\mathbf{P}_h^{n+1}-\mathbf{P}_h^n\|_h/\|\mathbf{P}_h^{n}-\mathbf{P}_h^{n-1}\|_h$ approach comparable values for the different mesh sizes, providing numerical evidence for a mesh-uniform linear convergence rate, consistently with \Cref{thm:local_fixed_point}. As expected, the continuation strategy requires significantly fewer iterations than the Poisson initialization. Nevertheless, the latter also converges to the same accuracy, albeit more slowly, suggesting that the practical basin of attraction may be larger than the sufficient neighbourhood provided by \Cref{thm:local_fixed_point}.

\section{Conclusions}
We developed a unified framework for the fully discrete convergence analysis of the least-squares splitting method for the Monge-Amp\`ere equation. Under suitable assumptions on the discretization of the linear variational subproblem, namely a discrete Miranda-Talenti inequality and a sufficiently high polynomial degree, we proved local convergence of the iterative method and optimal-order convergence of its fixed point to the exact solution in an \(H^2\)-type norm. We verified these assumptions for representative \(C^1\), \(C^0\), and DG finite element methods using existing discrete Miranda-Talenti estimates. The numerical experiments agree with the theoretical results. In particular, they confirm the predicted convergence rates and indicate that, in the tested cases, the practical basin of attraction is larger than the sufficient \(O(h)\)-neighbourhood established by the analysis.

Although the present analysis does not cover the original low-order discretizations of \cite{caboussat,peruso}, the framework applies to any reconstruction satisfying the abstract stability and consistency assumptions. It therefore also provides a route to the analysis of other existing or future low-order schemes, once suitable discrete Miranda-Talenti and consistency estimates are established.

Several other directions remain open for future research. These include Monge-Amp\`ere equations with gradient-dependent right-hand sides and second boundary conditions, in particular those arising in optimal transport and related problems. Another natural direction is the extension of the framework to other uniformly elliptic fully nonlinear operators.

\section*{Acknowledgments}
The author is grateful to Mohamed Ben Abdelouahab, Alexandre Caboussat and Marco Picasso for fruitful discussions. 

\appendix

\crefname{section}{appendix}{appendices}
\Crefname{section}{Appendix}{Appendices}
\section{Additional numerical results}\label{app:numres}

We report some additional numerical results to complement those in the main body of the manuscript. 
\subsection{Condition numbers}\label{app:condition_numbers}
The matrices arising from the assembly of \(a_h\) become increasingly ill-conditioned under mesh refinement, with observed condition numbers growing approximately as \(h^{-4}\), see \Cref{tab:condition_numbers}. The condition number is slightly better for $\mathbb{P}_{2m}$ exact quadrature nodes.

\subsection{Comparison of different values of $\sigma$}\label{app:sigma}
We investigate the influence of the parameter $\sigma\in\{1,2,5,20\}$ for the first test case. \Cref{fig:sigma_h} reports the $H^2$-type error as a function of the mesh size $h$, while \Cref{fig:sigma_iter} shows the convergence history together with the ratio $\|\mathbf{P}_h^{n+1}-\mathbf{P}_h^n\|_h/\|\mathbf{P}_h^{n}-\mathbf{P}_h^{n-1}\|_h$. The results indicate that, although $\sigma$ affects the convergence rate, consistently with \Cref{lem:transverlality}, the iteration converges for all values considered, and the convergence with respect to $h$ is essentially unchanged.

\begin{table}[h]
\centering
\caption{Condition numbers of the matrices associated with $a_h$. For each value of $m$, the columns $h_1,h_2,h_3$ denote the three finest mesh sizes used: $m=0$: $1/32$, $1/64$, $1/128$; $m=1$: $1/16$, $1/32$, $1/64$; $m=2$: $1/8$, $1/16$, $1/32$.}
\label{tab:condition_numbers}

\medskip
\textbf{$\sigma=1$}\\
\medskip

\begin{tabular}{c | c | c c c | c c c}
 & & \multicolumn{3}{c|}{$C^0$} & \multicolumn{3}{c}{DG} \\
\hline
$m$ & points & $h_1$ & $h_2$ & $h_3$ & $h_1$ & $h_2$ & $h_3$ \\
\hline
$0$ & barycenter & $10^{7}$ & $2\times 10^{8}$ & $3\times 10^{9}$ & $3\times 10^{7}$ & $4\times 10^{8}$ & $7\times 10^{9}$ \\
\hline
$1$ & $\mathbb{P}_1$ nodes & $2\times 10^{7}$ & $3\times 10^{8}$ & $4\times 10^{9}$ & $6\times 10^{7}$ & $9\times 10^{8}$ & $10^{10}$ \\
$1$ & Gauss & $6\times 10^{6}$ & $10^{8}$ & $2\times 10^{9}$ & $2\times 10^{7}$ & $4\times 10^{8}$ & $6\times 10^{9}$ \\
\hline
$2$ & $\mathbb{P}_2$ nodes & $10^{7}$ & $2\times 10^{8}$ & $3\times 10^{9}$ & $3\times 10^{7}$ & $5\times 10^{8}$ & $7\times 10^{9}$ \\
$2$ & Dunavant & $2\times 10^{6}$ & $4\times 10^{7}$ & $6\times 10^{8}$ & $9\times 10^{6}$ & $10^{8}$ & $2\times 10^{9}$ \\
\end{tabular}

\bigskip
\textbf{$\sigma=2$}\\
\medskip
\begin{tabular}{c | c | c c c | c c c}
 & & \multicolumn{3}{c|}{$C^0$} & \multicolumn{3}{c}{DG} \\
\hline
$m$ & points & $h_1$ & $h_2$ & $h_3$ & $h_1$ & $h_2$ & $h_3$ \\
\hline
$0$ & barycenter & $10^{7}$ & $2\times 10^{8}$ & $2\times 10^{9}$ & $2\times 10^{7}$ & $4\times 10^{8}$ & $6\times 10^{9}$ \\
\hline
$1$ & $\mathbb{P}_1$ nodes & $10^{7}$ & $2\times 10^{8}$ & $3\times 10^{9}$ & $4\times 10^{7}$ & $6\times 10^{8}$ & $10^{10}$ \\
$1$ & Gauss & $6\times 10^{6}$ & $9\times 10^{7}$ & $10^{9}$ & $2\times 10^{7}$ & $3\times 10^{8}$ & $4\times 10^{9}$ \\
\hline
$2$ & $\mathbb{P}_2$ nodes & $9\times 10^{6}$ & $10^{8}$ & $2\times 10^{9}$ & $2\times 10^{7}$ & $3\times 10^{8}$ & $5\times 10^{9}$ \\
$2$ & Dunavant & $2\times 10^{6}$ & $3\times 10^{7}$ & $5\times 10^{8}$ & $6\times 10^{6}$ & $9\times 10^{7}$ & $10^{9}$ \\
\end{tabular}

\bigskip
\textbf{$\sigma=5$}\\
\medskip

\begin{tabular}{c | c | c c c | c c c}
 & & \multicolumn{3}{c|}{$C^0$} & \multicolumn{3}{c}{DG} \\
\hline
$m$ & points & $h_1$ & $h_2$ & $h_3$ & $h_1$ & $h_2$ & $h_3$ \\
\hline
$0$ & barycenter & $10^{7}$ & $2\times 10^{8}$ & $3\times 10^{9}$ & $3\times 10^{7}$ & $4\times 10^{8}$ & $7\times 10^{9}$ \\
\hline
$1$ & $\mathbb{P}_1$ nodes & $10^{7}$ & $2\times 10^{8}$ & $3\times 10^{9}$ & $3\times 10^{7}$ & $4\times 10^{8}$ & $7\times 10^{9}$ \\
$1$ & Gauss & $6\times 10^{6}$ & $10^{8}$ & $2\times 10^{9}$ & $2\times 10^{7}$ & $2\times 10^{8}$ & $4\times 10^{9}$ \\
\hline
$2$ & $\mathbb{P}_2$ nodes & $8\times 10^{6}$ & $10^{8}$ & $2\times 10^{9}$ & $2\times 10^{7}$ & $2\times 10^{8}$ & $4\times 10^{9}$ \\
$2$ & Dunavant & $2\times 10^{6}$ & $3\times 10^{7}$ & $5\times 10^{8}$ & $5\times 10^{6}$ & $7\times 10^{7}$ & $10^{9}$ \\
\end{tabular}

\bigskip
\textbf{$\sigma=10$}\\
\medskip
\begin{tabular}{c | c | c c c | c c c}
 & & \multicolumn{3}{c|}{$C^0$} & \multicolumn{3}{c}{DG} \\
\hline
$m$ & points & $h_1$ & $h_2$ & $h_3$ & $h_1$ & $h_2$ & $h_3$ \\
\hline
$0$ & barycenter & $2\times 10^{7}$ & $3\times 10^{8}$ & $5\times 10^{9}$ & $4\times 10^{7}$ & $6\times 10^{8}$ & $10^{10}$ \\
\hline
$1$ & $\mathbb{P}_1$ nodes & $10^{7}$ & $2\times 10^{8}$ & $3\times 10^{9}$ & $3\times 10^{7}$ & $4\times 10^{8}$ & $7\times 10^{9}$ \\
$1$ & Gauss & $8\times 10^{6}$ & $10^{8}$ & $2\times 10^{9}$ & $2\times 10^{7}$ & $3\times 10^{8}$ & $5\times 10^{9}$ \\
\hline
$2$ & $\mathbb{P}_2$ nodes & $8\times 10^{6}$ & $10^{8}$ & $2\times 10^{9}$ & $10^{7}$ & $2\times 10^{8}$ & $3\times 10^{9}$ \\
$2$ & Dunavant & $2\times 10^{6}$ & $4\times 10^{7}$ & $6\times 10^{8}$ & $5\times 10^{6}$ & $8\times 10^{7}$ & $10^{9}$ \\
\end{tabular}

\bigskip
\textbf{$\sigma=20$}\\
\medskip

\begin{tabular}{c | c | c c c | c c c}
 & & \multicolumn{3}{c|}{$C^0$} & \multicolumn{3}{c}{DG} \\
\hline
$m$ & points & $h_1$ & $h_2$ & $h_3$ & $h_1$ & $h_2$ & $h_3$ \\
\hline
$0$ & barycenter & $3\times 10^{7}$ & $5\times 10^{8}$ & $8\times 10^{9}$ & $6\times 10^{7}$ & $9\times 10^{8}$ & $2\times 10^{10}$ \\
\hline
$1$ & $\mathbb{P}_1$ nodes & $2\times 10^{7}$ & $3\times 10^{8}$ & $5\times 10^{9}$ & $3\times 10^{7}$ & $5\times 10^{8}$ & $8\times 10^{9}$ \\
$1$ & Gauss & $10^{7}$ & $2\times 10^{8}$ & $3\times 10^{9}$ & $3\times 10^{7}$ & $4\times 10^{8}$ & $6\times 10^{9}$ \\
\hline
$2$ & $\mathbb{P}_2$ nodes & $9\times 10^{6}$ & $10^{8}$ & $2\times 10^{9}$ & $2\times 10^{7}$ & $2\times 10^{8}$ & $4\times 10^{9}$ \\
$2$ & Dunavant & $3\times 10^{6}$ & $5\times 10^{7}$ & $9\times 10^{8}$ & $6\times 10^{6}$ & $10^{8}$ & $2\times 10^{9}$ \\
\end{tabular}

\end{table}

\begin{figure}[h]
\centering

\begin{subfigure}{0.24\linewidth}
    \centering
    \small $\quad\quad\sigma=1$
\end{subfigure}
\begin{subfigure}{0.24\linewidth}
    \centering
    \small $\quad\quad\sigma=2$
\end{subfigure}
\begin{subfigure}{0.24\linewidth}
    \centering
    \small $\quad\quad\sigma=5$
\end{subfigure}
\begin{subfigure}{0.24\linewidth}
    \centering
    \small $\quad\quad\sigma=20$
\end{subfigure}

\vspace{0.3em}

\begin{subfigure}{0.24\linewidth}
    \includegraphics[width=0.99\linewidth]{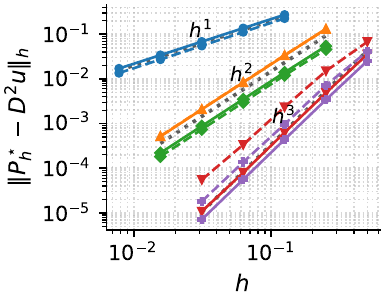}
\end{subfigure}
\begin{subfigure}{0.24\linewidth}
    \includegraphics[width=0.99\linewidth]{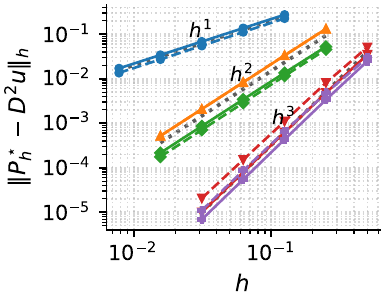}
\end{subfigure}
\begin{subfigure}{0.24\linewidth}
    \includegraphics[width=0.99\linewidth]{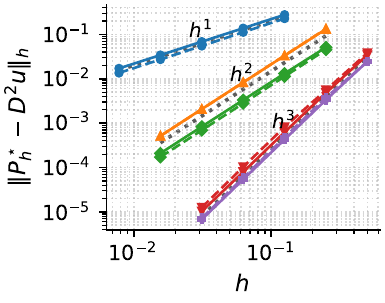}
\end{subfigure}
\begin{subfigure}{0.24\linewidth}
    \includegraphics[width=0.99\linewidth]{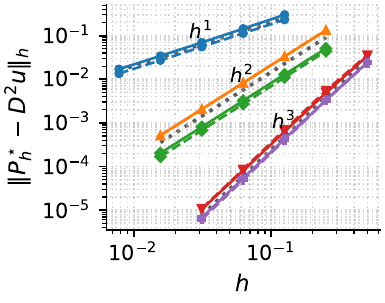}
\end{subfigure}
\begin{subfigure}{0.24\linewidth}
    \includegraphics[width=0.99\linewidth]{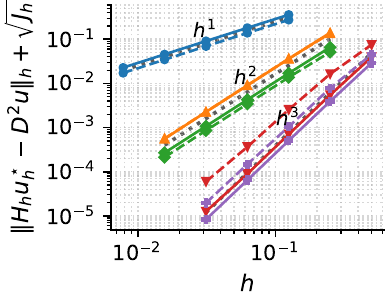}
\end{subfigure}
\begin{subfigure}{0.24\linewidth}
    \includegraphics[width=0.99\linewidth]{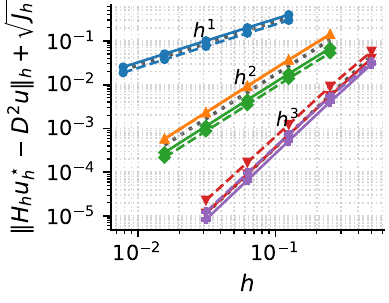}
\end{subfigure}
\begin{subfigure}{0.24\linewidth}
    \includegraphics[width=0.99\linewidth]{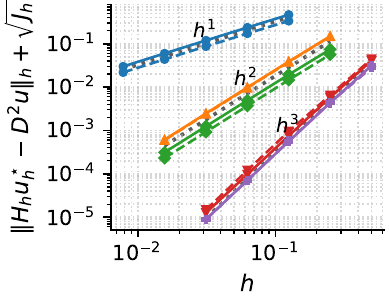}
\end{subfigure}
\begin{subfigure}{0.24\linewidth}
    \includegraphics[width=0.99\linewidth]{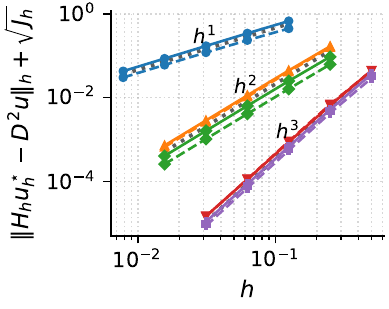}
\end{subfigure}
\vspace{0.2em}
\begin{subfigure}{0.99\linewidth}
    \centering
    \includegraphics[width = \linewidth]{figures/test1_sigma_10/legend_convergence.pdf}
\end{subfigure}
\caption{Columns (from left to right) correspond to $\sigma=1$, $2$, $5$, and $20$. The top row shows $\|\mathbf{P}_h^\star-D^2u\|_h$, while the bottom row shows $\|H_hu_h^\star-D^2u\|_h + J_h(u_h^\star-u,u_h^\star-u)^{1/2}$, both as functions of the mesh size $h$ for test case 1.}
\label{fig:sigma_h}
\end{figure}

\begin{figure}[h]
    \centering

\makebox[\linewidth]{$\sigma = 1$}

\vspace{0.01em}
\begin{subfigure}{0.49\linewidth}
    \centering
    \includegraphics[width=0.48\linewidth]{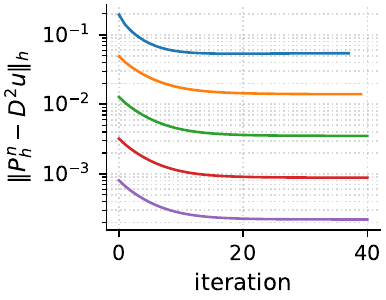}
    \includegraphics[width=0.48\linewidth]{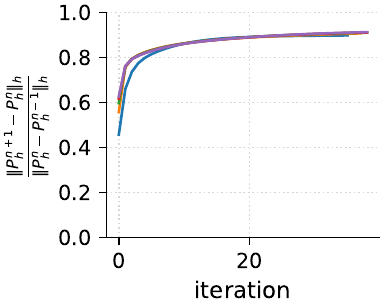}
    \caption*{\centering $C^0$, $m=1$, three-point Gauss rule}
\end{subfigure}
\begin{subfigure}{0.49\linewidth}
    \centering
    \includegraphics[width=0.48\linewidth]{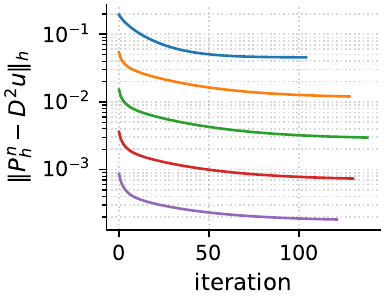}
    \includegraphics[width=0.48\linewidth]{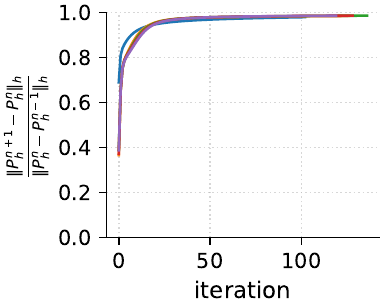}
    \caption*{\centering $DG$, $m=1$, three-point Gauss rule}
\end{subfigure}
\vspace{0.3em}

\makebox[\linewidth]{$\sigma = 2$}
\vspace{0.01em}

\begin{subfigure}{0.49\linewidth}
    \centering
    \includegraphics[width=0.48\linewidth]{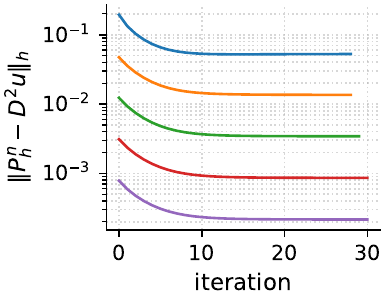}
    \includegraphics[width=0.48\linewidth]{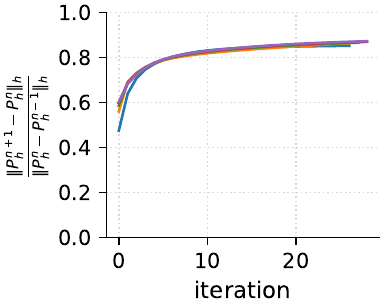}
    \caption*{\centering $C^0$, $m=1$, three-point Gauss rule}
\end{subfigure}
\begin{subfigure}{0.49\linewidth}
    \centering
    \includegraphics[width=0.48\linewidth]{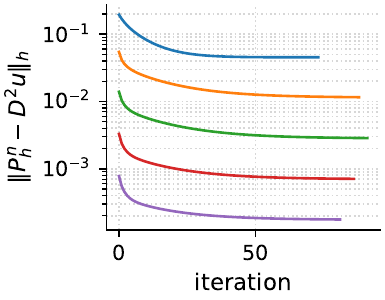}
    \includegraphics[width=0.48\linewidth]{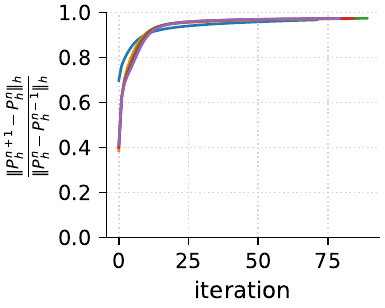}
    \caption*{\centering $DG$, $m=1$, three-point Gauss rule}
\end{subfigure}
\vspace{0.3em}

\makebox[\linewidth]{$\sigma = 5$}
\vspace{0.01em}

\begin{subfigure}{0.49\linewidth}
    \centering
    \includegraphics[width=0.48\linewidth]{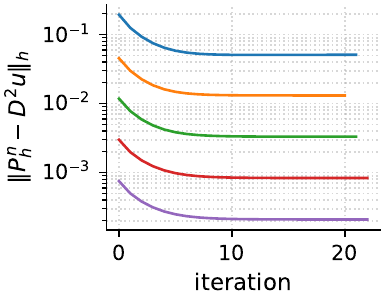}
    \includegraphics[width=0.48\linewidth]{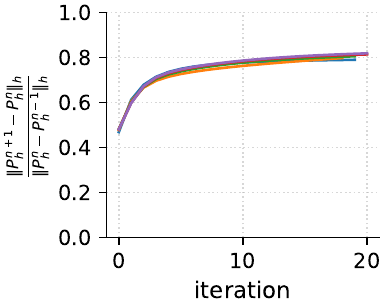}
    \caption*{\centering $C^0$, $m=1$, three-point Gauss rule}
\end{subfigure}
\begin{subfigure}{0.49\linewidth}
    \centering
    \includegraphics[width=0.48\linewidth]{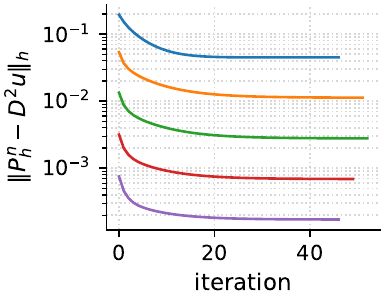}
    \includegraphics[width=0.48\linewidth]{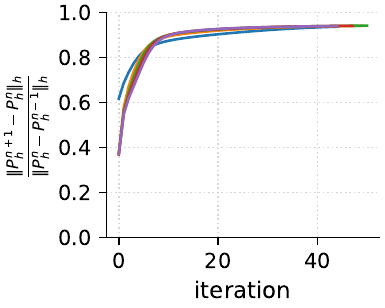}
    \caption*{\centering $DG$, $m=1$, three-point Gauss rule}
\end{subfigure}
\vspace{0.3em}
\makebox[\linewidth]{$\sigma = 20$}
\vspace{0.01em}

\begin{subfigure}{0.49\linewidth}
    \centering
    \includegraphics[width=0.48\linewidth]{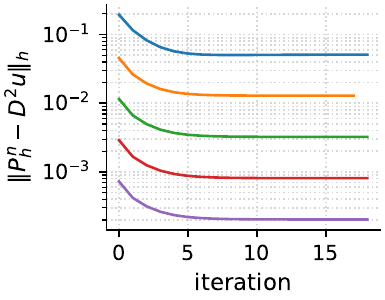}
    \includegraphics[width=0.48\linewidth]{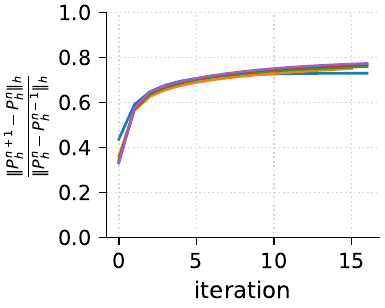}
    \caption*{\centering $C^0$, $m=1$, three-point Gauss rule}
\end{subfigure}
\begin{subfigure}{0.49\linewidth}
    \centering
    \includegraphics[width=0.48\linewidth]{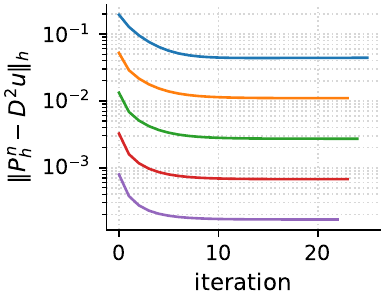}
    \includegraphics[width=0.48\linewidth]{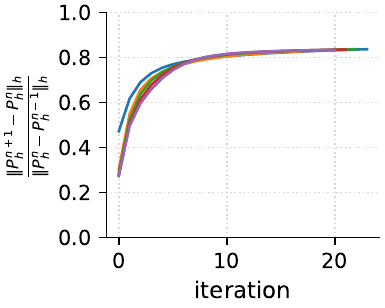}
    \caption*{\centering $DG$, $m=1$, three-point Gauss rule}
\end{subfigure}
\vspace{0.3em}

    \begin{subfigure}{0.99\linewidth}
    \centering
    \includegraphics[width = 0.65\linewidth]{figures/test1_sigma_10/iterations_c0_m1_internal_legend.pdf}
\end{subfigure}
\caption{Error and approximated contraction factor vs. iteration $n$ for $\sigma=1$ (first row), $2$ (second row), $5$ (third row), and $20$ (fourth row). From left to right: the error $\|\mathbf{P}_h^n-D^2u\|_h$ and the ratio  $\|\mathbf{P}_h^{n+1}-\mathbf{P}_h^n\|_h/\|\mathbf{P}_h^{n}-\mathbf{P}_h^{n-1}\|_h$ for $C^0$ finite elements with $m=1$ and the three-point Gauss rule and for DG finite elements with $m=1$ and the three-point Gauss rule.}
\label{fig:sigma_iter}
\end{figure}

\clearpage

\bibliographystyle{alpha}
\bibliography{sample}

\end{document}